\ifpdf\usepackage{hyperref}\else\usepackage[hypertex]{hyperref}\fi
\pgfplotsset{compat=1.17}
\begin{document}

\title{Moduli of representations of one-point extensions}

\author{Arif Dönmez}
\email{arif.doenmez@rub.de, arif.doenmez@iuf-duesseldorf.de}
\address{IUF – Leibniz Research Institute for Environmental Medicine, Düsseldorf, Germany}

\author{Markus Reineke}
\email{markus.reineke@rub.de}
\address{Faculty of Mathematics, Ruhr University Bochum, Bochum, Germany}

\begin{abstract}
We study moduli spaces of (semi-)stable representations of one-point extensions of quivers by rigid representations. This class of moduli spaces unifies Grassmannians of subrepresentations of rigid representations and moduli spaces of representations of generalized Kronecker quivers. With homological methods, we find numerical criteria for non-emptiness and results on basic geometric properties, construct generating semi-invariants, expand the {Gel'fand MacPherson} correspondence, and derive a formula for the {Poincar\'e} polynomial in singular cohomology of these moduli spaces.
\end{abstract}
\maketitle

\section{Introduction}
In this paper we construct and study moduli spaces parametrizing isomorphism classes of representations of so-called one-point extensions of path algebras of quivers. This constitutes a class of algebras of global dimension two, for which many of the favourable properties of moduli spaces of representations of quivers still hold. Namely, we find numerical criteria for non-emptiness and results on basic geometric properties, construct generating semi-invariants, expand the {Gel'fand MacPherson} correspondence, and derive a formula for the {Poincar\'e} polynomial in singular cohomology of these moduli spaces. We explicitly apply the developed theory in several examples.

To do this, we fix a path algebra $A=kQ$ of a finite quiver, which we extend by a representation $T$ of $A$ to the one-point extension algebra $A[T]$.
We construct standard projective resolutions for representations of $A[T]$. One of the most important consequences of this is an explicit description of the space $\Ext^2$ of representations, which allows us to conclude its vanishing on so-called full representations (under the assumption of $T$ being rigid). See Theorems \ref{homologicalProp}, \ref{standardRes}, \ref{Ext2Symmetrie}, \ref{ext2aufvolle} for precise formulations. Moreover, the standard resolutions allow us to calculate the Euler form of $A[T]$ in Theorem \ref{DerivationAllg}, Corollary \ref{Derivation}, \ref{eulerform_folgerung}.
After these preparations, we consider the representation varieties of $A[T]$, interpret the found homological properties in this geometric setting, and rediscover some results by {Schofield} and  {Crawley-Boevey} (with different methods) in Theorem \ref{schofieldCBRefindOne}, Corollary \ref{schofieldCBRefindTwo}, \ref{SchofieldHomFormel}. Moreover, in this way we can determine the  {Zariski} tangent space of the representation variety in each point (Theorem \ref{tangentspace}) and conclude that the open subset of full representations is smooth and irreducible (Theorem \ref{homlogieTrifftGeometrie2}).

We follow the GIT approach of {King} in the construction of moduli spaces. For this, we choose a canonical stability condition, such that the resulting spaces unify quiver Grassmannians of subrepresentations of rigid representations (Theorem \ref{gelfand_mac}) and moduli spaces of representations of generalized {Kronecker} quivers. We find a numerical criterion for semistability in Theorem \ref{kingStabKoecherErw}, which allows us to conclude that semi-stable representations are full representations (Corollary \ref{sstSindGenVoll}). In this way, we can apply the above geometric properties of representation varieties to prove that the resulting moduli spaces are smooth, irreducible and of expected dimension in Theorem \ref{GeoPropModuli}.

After this, we prove a relative version of a recursive numerical criterion (\cite{Re}) for non-emptiness of the semi-stable locus in Theorems \ref{verAllgKin}, \ref{rekursiveFormula}. A set of generators for the ring of semi-invariants, closely following  {Schofield} and  {Van den Bergh} (\cite{S2}) is given in section $6$.

Moreover, we find a form of {Gel'fand MacPherson} correspondence in terms of these moduli spaces in  Theorem \ref{gelfand_mac}. We finish the paper by deriving a recursive formula to determine the  {Poincaré} polynomial of the moduli spaces  (Theorem \ref{poincarepol}). 

\subsection*{Acknowledgements} 
The authors are supported by the DFG SFB / Transregio 191 `Symplektische Strukturen in Geometrie, Algebra und Dynamik'.

\section{One-point extensions and their representations}$ $\\
We fix an algebraically closed field $k$. Let $A=kQ$ be the path algebra of a finite quiver $Q$, and let $T$ be a  finite dimensional $A$-module. We consider the \textit{one-point extension of $A$ by $T$} 
\[
A[T] ~:=~\triAlg{A}{k}{T} 
~:=~
\left\lbrace 
\begin{pmatrix}
a & t \\
0 & \lambda
\end{pmatrix}\;:\; a\in A,\,t\in T,\, \lambda\in k\right\rbrace.
\]
Recall that the multiplication in (the $k$-algebra) $A[T]$ is given by the formal matrix multiplication
\[
\begin{pmatrix}
a & t \\
0 & \lambda
\end{pmatrix} \cdot
\begin{pmatrix}
a' & t' \\
0 & \lambda'
\end{pmatrix}
~:=~
\begin{pmatrix}
aa' & at'+t\lambda' \\
0 & \lambda \lambda'
\end{pmatrix}
\] 
for $a,a'\in A,\, \lambda,\lambda'\in k,\,t,t'\in T$ and componentwise addition.

We define a category $\Rep_A(T\otimes_k ?_2 \to ?_1)$ as follows. Its objects $M$ are tuples $(M_1,M_2)$, where $M_1$ is a left $A$-module and $M_2$ is a $k$-vector space, together with a map of $A$-modules $f_M:T \otimes_k M_2 \to M_1$. A morphism $\varphi: M\to N$ is a tuple $(\varphi_1,\varphi_2)$, where $\varphi_1:M_1\to N_1$ is a map of $A$-modules and $\varphi_2:M_2\to N_2$ a $k$-linear map, such that the diagram 
\[
\begin{tikzcd}
T\otimes_k M_2 \arrow[r, "f_M"] \arrow[d, "\varphi_1"]  & M_1 \arrow[d, "\varphi_2"] \\
T\otimes_k N_2 \arrow[r, "f_N"] & N_1 
\end{tikzcd}                                
\] 
commutes. Composition of morphisms is defined componentwise.
\begin{lmm}
The category $\Rep_A(T\otimes_k ?_2 \to ?_1)$ is equivalent to the category of left $A[T]$-modules.
\end{lmm}
\begin{proof}
See for example \cite{Ri}.
\end{proof}

We can easily describe a quiver $\hat{Q}$ and relations $\Rel$ in this situation such that $A[T]\simeq k\hat{Q}/\langle \Rel \rangle$. By extending $Q$ the following way  
\begin{eqnarray*}
\hat{Q}_0 & = &  Q_0 \sqcup \{\infty \},  \\
\hat{Q}_1  & =  & Q_1 ~ \sqcup ~ \bigg\{ \rho_{l, (i)}:\infty \to i \;:\; i\in Q_0,\,l=1,\ldots, \dim T_i \bigg\}, 
\end{eqnarray*}
we obtain the quiver $\hat{Q}$. We obtain the relations $\Rel$ by using the transformation matrices induced by the fixed representation $T$: For each vertex $i\in Q_0$ we choose a $k$-basis $\rho_{1,(i)}, \ldots, \rho_{\dim T_i,(i)}$ of $T_i$ and write 
$$
T(\alpha)\rho_{l,(i)} = \sum_{s=1}^{\dim T_j}\lambda_{s,l}^{(\alpha)}\rho_{s,(j)}
$$
for all $l=1,\ldots, \dim T_i$ and each arrow $(\alpha:i\to j)\in Q_1$.
The coefficients of these linear combinations induce the relations $\Rel$
\begin{equation} \label{eq:relationGabriel}
\alpha \cdot \rho_{l,(i)} ~=~ \sum_{s=1}^{\dim T_j}\lambda_{s,l}^{(\alpha)}\rho_{s,(j)}. 
\end{equation}

Obviously this construction of relations does not depend (up to isomorphism of $k$-algebras) on the basis we choose in $T_i$ for each vertex $i\in Q_0$.

\begin{bsp}\label{BeispielStart} Let $Q=\left(1\to 2\right)$.\\
	By extending the path algebra of $Q$ with $k^3 \xto{ [1~0~0] }k$ we get
	$$
	\begin{tikzcd}		 
    1	 \arrow{r}{m} &2  \\
    0	 \arrow[bend left=50]{u}{\beta} \arrow{u}{\alpha} \arrow[swap,bend right=50]{u}{\gamma}   & 
	\end{tikzcd} \text{~~with $m \beta =  0,\;m \gamma =  0$.}	
	$$

\end{bsp}
\section{Homological properties}
Since $A$ is the path algebra of a quiver $Q$, we can and will identify $A$ with the tensor algebra  $\mathsf{T}_RX$, where $R$ is the semisimple $k$-algebra generated by the vertices of $Q$ and $X$ is the $R$-$R$-bimodule generated (as a $k$-vector space) by the arrows of $Q$.

\subsection{The standard resolution}
Let $\{e_1,\ldots,e_n\}$ be the complete set of primitive orthogonal idempotents given by the length $0$ paths in $A$. Then
\[
\begin{pmatrix} e_1 & 0 \\ 0 & 0 \end{pmatrix},
\ldots,
\begin{pmatrix} e_n & 0 \\ 0& 0 \end{pmatrix},
\begin{pmatrix} 0 & 0 \\ 0& 1 \end{pmatrix}
\]
is a complete set of primitive orthogonal idempotents of $A[T]$.
Let $\tilde{R}$ be the $k$-subalgebra of $A[T]$ given by
\[
\tilde{R} ~~ := ~~
\bop_{i=1}^nk\begin{pmatrix} e_i & 0 \\ 0 & 0 \end{pmatrix}\;\; \op \;\;
k\begin{pmatrix} 0 & 0 \\ 0& 1 \end{pmatrix}.
\]
The multiplication of $A$ resp. $A[T]$ induces the {Eilenberg} sequence \cite[Proposition 2.7.3]{cohn2002further}
\[
\mathfrak{E}(A): 0\to \Omega(A) \xto{\kappa_A} A\otimes_R A \xto{\mu_A} A \to 0,
\]{
resp.
\[ 
\mathfrak{E}(A[T]): 0\to \Omega(A[T]) \to A[T]\otimes_{\tilde{R}} A[T]\xto{\mu_{A[T]}} A[T] \to 0,
\] }
with $\Omega(A):=\ker{\mu_A}$ resp. $\Omega(A[T]):=\ker{\mu_{A[T]}}$.
 
This sequence of
$A$-bimodules resp. $A[T]$-bimodules splits in the category of right $A$-modules resp. $A[T]$-modules.

\begin{thm}\label{homologicalProp}
Let $\tilde{M}=(M,V,f:T\otimes_k V\to M)$ be an $A[T]$-module. Then there is  a short exact sequence
\[
0\to \Omega(A[T])\otimes_{A[T]}\tilde{M}\to A[T]\otimes_{\tilde{R}}\tilde{M}\to\tilde{M}\to 0,
\] 
where
\begin{eqnarray*}
A[T]\otimes_{\tilde{R}}\tilde{M} & = &
\bigg(T\otimes_k V,\,V,\;T\otimes_k V \xto\id T\otimes_k V\bigg)\;\op\\ & 
&\quad \big(A\otimes_R M,0, {0}\big),   \\& &\\
\Omega(A[T])\otimes_{A[T]}\tilde{M} & = & 
\bigg( T\otimes_k V,\,0,\,{0} \bigg) \op \bigg( \Omega(A)\otimes_A M,\,0,\,{0} \bigg). 
\end{eqnarray*}
\end{thm}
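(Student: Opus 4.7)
The plan is to apply the functor $-\otimes_{A[T]}\tilde M$ to the Eilenberg sequence $\mathfrak E(A[T])$ displayed just before the theorem. Because that sequence was observed to split in the category of right $A[T]$-modules, tensoring on the right is exact and produces a short exact sequence whose right-hand term is $A[T]\otimes_{A[T]}\tilde M=\tilde M$. This is the short exact sequence asserted in the theorem; the remaining work is to identify the middle and left terms explicitly.

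For the middle term I would use the decomposition of $\tilde R=\bigoplus_{i\in Q_0} ke_i^+\oplus ke_\infty$ into orthogonal primitive idempotents, which yields
\[
A[T]\otimes_{\tilde R}\tilde M\;=\;\bigoplus_{i\in Q_0} A[T]e_i^+\otimes_k M_i\;\oplus\;A[T]e_\infty\otimes_k V.
\]
Inspecting the block-matrix structure of $A[T]$ gives $A[T]e_i^+\cong Ae_i$ (concentrated in the top-left block, hence corresponding to the $A[T]$-module $(Ae_i,0,0)$), and $A[T]e_\infty\cong (T,k,\mathrm{id})$ (since $e_j^+A[T]e_\infty=T_j$ and $e_\infty A[T]e_\infty=k$, with left multiplication by $\begin{pmatrix}0&t\\0&0\end{pmatrix}$ sending $\lambda\in k$ to $\lambda t\in T$). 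Collecting summands produces the claimed formula $(A\otimes_R M,0,0)\oplus(T\otimes_k V,V,\mathrm{id})$.

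For the left term, the short exact sequence identifies $\Omega(A[T])\otimes_{A[T]}\tilde M$ with the kernel of the multiplication map $\pi:A[T]\otimes_{\tilde R}\tilde M\to\tilde M$. Under the previous decomposition $\pi$ acts as the identity on the $V$-summand, so the kernel has zero $V$-part; on the $M$-summand it is $(a\otimes m,t\otimes v)\mapsto am+f(t\otimes v)$. The Eilenberg sequence for $A$ identifies the restriction of $\pi$ to the first summand with $\mu_A:A\otimes_R M\to M$, whose kernel is $\Omega(A)\otimes_A M$ and which is surjective onto $M$; this allows me to split off the $\Omega(A)\otimes_A M$-summand inside $\ker\pi$ and match the remaining data with $T\otimes_k V$, yielding the asserted decomposition. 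The main obstacle is this final identification step: I must verify that the $A[T]$-module structure on the kernel, inherited from its embedding in $A[T]\otimes_{\tilde R}\tilde M$, is compatible with the claimed direct sum, which in particular requires careful bookkeeping of how the relations \eqref{eq:relationGabriel} interact with the structure map $f$ in order to separate the two summands.
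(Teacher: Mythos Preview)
Your approach is essentially the same as the paper's: tensor the right-split Eilenberg sequence $\mathfrak E(A[T])$ with $\tilde M$, identify the middle term via the idempotent decomposition of $\tilde R$, and then identify the left term as the kernel of the resulting epimorphism. The only difference is organizational: where you compute $\ker\pi$ and then argue it splits, the paper writes down the explicit $A$-linear isomorphism $g\colon (T\otimes_k V)\oplus(\Omega(A)\otimes_A M)\to\ker\pi$, $t\otimes v+\omega\otimes m\mapsto (t\otimes v)-1\otimes f(t\otimes v)+\kappa_M(\omega\otimes m)$, inside a two-row commutative diagram with exact rows---this dissolves the ``bookkeeping'' obstacle you flag, and in fact no relation-chasing is needed since the kernel has trivial $V$-part and hence its $A[T]$-structure is just its $A$-structure.
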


\begin{proof}
We obtain the short exact sequence by tensoring the split short exact sequence of right  $A[T]$-modules  $\mathfrak{E}(A[T])$ over $A[T]$ with $\tilde{M}$.

The first equation follows immediately using the definition of $\tilde{R}$. Namely, for $M_0:=V$ we have  
\[
A[T]\tilde{e}_0   =  \begin{pmatrix} 0 & T\\ 0 & k \end{pmatrix} 
\]
and $A[T]\otimes_{\tilde{R}}\tilde{M} =\bop_{i=0}^n A[T]\tilde{e}_i\otimes_k M_i$.

The second equation follows from the following commutative diagram with exact rows:
\[
\begin{tikzcd}[scale cd=.98, column sep=small]
0 \arrow[r] & 0 \arrow[r]\arrow[d] & T\otimes_k V \arrow[r, "\id"]\arrow[d, "(\id\;\; 0)^\top"] & T\otimes_k V \arrow[d, "f"]  \arrow[r] & 0 \\
0 \arrow[r] & (T\otimes_k V) \op \big(\Omega(A)\otimes_A M\big) \arrow[r, "g"] & (T\otimes_k V) \arrow[r, "(f\;\;\mu_M)" ]\op (A\otimes_R M) & M\arrow[r] & 0,   
\end{tikzcd}                                
\]
where
\[
\mathfrak{E}(A)\otimes_A M: 0\to \Omega(A)\otimes_A M \xto{\kappa_M} A\otimes_R M\xto{\mu_M} M \to 0,
\]
and $g$ is defined as
\[
g\bigg((t\otimes v) + (\omega\otimes m) \bigg)~~:=~~
t\otimes v ~~- 1\otimes f(t\otimes v) + \kappa_M(\omega\otimes m) 
\]
for $t\otimes v\in T\otimes_k V,\;\,\omega\otimes m\in \Omega(A)\otimes_A M$.
\end{proof}

\begin{rmk}To simplify the notation, set $\Omega_A=\Omega(A)$ and $N=\Omega_A\otimes_A M$. Combining Theorem \ref{homologicalProp} with the standard resolution
$$0\rightarrow \Omega_A\otimes_A X\rightarrow A\otimes X\rightarrow X\rightarrow 0$$
of $A$-modules, we obtain the following long exact sequence:
\[
\begin{tikzcd}[column sep=tiny, row sep=small]
 0 \arrow[r] & \Omega_A\otimes_A (T\otimes_k V) \arrow[r] &\big(A\otimes_R (T\otimes_k V)\big)\op N \arrow[dr]\arrow[r]  &  A[T]\otimes_{\tilde{R}}\tilde{M} \arrow[r] & \tilde{M}\arrow[r] & 0. \\
& & & \big(T\otimes_k V\big)\;\op\;N\arrow[u] \arrow[dr]	&                                            &  \\
& &	&   0\arrow[u] &0 & \\
\end{tikzcd}                                
\]
\end{rmk}
Using this construction, we obtain:
\begin{thm}\label{standardRes} For $A[T]$-modules $\tilde{M}=(M,V,f:T\otimes_k V \to M)$ we have the standard projective resolution:
\[ 
\begin{tikzcd}
0 \arrow[d]   & 0 \arrow[d] &  0 \arrow[d]  \\ 
P_2(\tilde{M})\arrow[d] :=  & 0 \arrow[d]\arrow[r] & (A\otimes_R X\otimes_R A)\otimes_A(T\otimes V) \arrow[d, "h"]\\  
P_1(\tilde{M})\arrow[d] :=  & 0 \arrow[d]\arrow[r] & \big(A\otimes_R (T\otimes V)\big)\op \big( (A\otimes_RX\otimes A)\otimes_A M\big)\arrow[d, "g"] \\
P_0(\tilde{M})\arrow[d] :=  & T\otimes V \arrow[r, "(\id~~ 0)^\top"]\arrow[d, "\id"]  & (T\otimes V) \arrow[d, "(f\;\;\,\mu_M)" ]\op (A\otimes_R M) \\
~~~\tilde{M}\arrow[d]= & T\otimes V \arrow[r, "f"] \arrow[d]& M\arrow[d] \\
0 & 0  & 0.
\end{tikzcd}
\]
Here $g$ and $h$ are defined as  
\begin{eqnarray*}
g\left(a\otimes (t\otimes v)+  a\otimes x \otimes  m \right) & = & at\otimes v - a\otimes f(t\otimes v)+ ax\otimes m -a\otimes xm,\\
h\left( a\otimes x \otimes (t\otimes v) \right) & = & \left( ax\otimes (t\otimes v)- a\otimes (xt\otimes v) \right)+a\otimes x \otimes f(t\otimes v),
\end{eqnarray*} $ $\\
for $a\otimes (t\otimes v) \in A\otimes_R(T\otimes_k V), \,a\otimes x \otimes  m\in  (A\otimes_R X\otimes_R A)\otimes_A M$ and
$a\otimes x \otimes (t\otimes v) \in (A\otimes_R X\otimes_R A)\otimes_A(T\otimes_k V)$.\\ 
In particular, we have ${\rm gldim}\; A[T]\leq 2$.
\end{thm}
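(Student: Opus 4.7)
The plan is to splice Theorem \ref{homologicalProp} with the standard Eilenberg resolution of $A$-modules. Theorem \ref{homologicalProp} already provides the short exact sequence
\[
0\to\Omega(A[T])\otimes_{A[T]}\tilde{M}\to A[T]\otimes_{\tilde{R}}\tilde{M}\to\tilde{M}\to 0,
\]
whose middle term $P_0(\tilde{M})$ decomposes as $(T\otimes V,V,\id)\op(A\otimes_R M,0,0)$ and is therefore projective, being a direct sum of copies of the indecomposable projectives $A[T]\tilde{e}_\infty$ and $A[T]\tilde{e}_i$. Its kernel identifies with the $A[T]$-module $(T\otimes V\op \Omega_A\otimes_A M,\,0,\,0)$ concentrated in the $A$-component, so the task reduces to constructing a length-one projective resolution of this kernel.

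I would next treat the two summands separately. Using $\Omega_A=A\otimes_R X\otimes_R A$, one computes $\Omega_A\otimes_A M=A\otimes_R X\otimes_R M$; as an $A[T]$-module with zero $V$-component this decomposes as a direct sum of projectives of the form $A[T]\tilde{e}_i$, so it needs no further resolution and contributes directly to $P_1(\tilde{M})$. For the other summand $(T\otimes V,0,0)$, apply the Eilenberg sequence of $A$-modules to $T\otimes V$ itself,
\[
0\to (A\otimes_R X\otimes_R A)\otimes_A(T\otimes V)\to A\otimes_R(T\otimes V)\to T\otimes V\to 0,
\]
and view each term as an $A[T]$-module concentrated at the $A$-component. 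This yields a length-one projective resolution of $(T\otimes V,0,0)$; assembling both pieces produces the proposed $P_1(\tilde{M})$ and $P_2(\tilde{M})$.

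The remaining task is to identify the two differentials with the formulas prescribed for $g$ and $h$. The map $g$ is the composite $P_1(\tilde{M})\to\ker(P_0(\tilde{M})\to\tilde{M})\hookrightarrow P_0(\tilde{M})$. Reading the splitting from the diagram in the proof of Theorem \ref{homologicalProp}, the kernel inclusion sends $t\otimes v\mapsto t\otimes v-1\otimes f(t\otimes v)$ on the $T\otimes V$-part and $a\otimes x\otimes m\mapsto ax\otimes m-a\otimes xm$ on the $\Omega_A\otimes_A M$-part; precomposing with the surjection $A\otimes_R(T\otimes V)\twoheadrightarrow T\otimes V$, $a\otimes(t\otimes v)\mapsto at\otimes v$, and extending $A[T]$-linearly gives the stated formula for $g$. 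For $h$, the Eilenberg differential for $T\otimes V$ contributes the term $ax\otimes(t\otimes v)-a\otimes(xt\otimes v)$, while the further summand $a\otimes x\otimes f(t\otimes v)$ in the $(A\otimes_R X\otimes_R A)\otimes_A M$-component is forced by the requirement that the image of $h$ lie in $\ker g$.

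The main technical obstacle is pinning down the compensation term $a\otimes x\otimes f(t\otimes v)$ in $h$: the splice of the two resolutions is not a direct sum, so one has to track carefully how lifting along the inner $P_0(\tilde{M})$-structure introduces the correction. A direct verification that $g\circ h=0$, using the $A$-linearity of $f$ in the form $xf(t\otimes v)=f(xt\otimes v)$, serves as a clean sanity check, and exactness at each step is then immediate from the construction, since we have simply spliced two short exact sequences. Finally, because every $A[T]$-module admits a projective resolution of length at most two, one concludes ${\rm gldim}\; A[T]\leq 2$.
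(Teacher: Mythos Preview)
Your proposal is correct and follows essentially the same approach as the paper: the Remark preceding Theorem \ref{standardRes} explicitly says to combine Theorem \ref{homologicalProp} with the standard $A$-resolution $0\to\Omega_A\otimes_A X\to A\otimes_R X\to X\to 0$ applied to $T\otimes V$, which is exactly your splice, and the paper then simply records the resulting long exact sequence and the maps $g,h$ without further argument.

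One small point worth tightening: your phrase ``precomposing with the surjection $A\otimes_R(T\otimes V)\twoheadrightarrow T\otimes V$'' followed by the kernel inclusion from Theorem \ref{homologicalProp} would literally produce $a\otimes(t\otimes v)\mapsto at\otimes v-1\otimes af(t\otimes v)$, not the stated $at\otimes v-a\otimes f(t\otimes v)$; it is your clause ``extending $A[T]$-linearly'' from the value on $1\otimes(t\otimes v)$ that actually yields the correct formula. This is exactly the phenomenon you flag later: the identification of the kernel with $(T\otimes V)\oplus(\Omega_A\otimes_A M)$ in Theorem \ref{homologicalProp} is only $R$-linear, so the two pieces do not sit as an $A$-module direct sum inside $P_0(\tilde M)$, and one is really performing a horseshoe-type lift rather than a naive direct sum of resolutions. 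Your recognition that the compensation term $a\otimes x\otimes f(t\otimes v)$ in $h$ is forced by $g\circ h=0$ is precisely the manifestation of this, and your direct verification using $xf(t\otimes v)=f(xt\otimes v)$ is the cleanest way to confirm exactness.
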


\subsection{Characterizing \texorpdfstring{$\Ext^2$}{Ext2}}$ $\\
Let $\tilde{M}=\big(M,V,f:T\otimes_kV\to M\big), \tilde{N}=\big(N,W,g:T\otimes_kW\to N\big)$ be $A[T]$-modules.

By using the {Eilenberg} sequence we obtain the following commutative diagram with exact rows and columns
\[
\begin{tikzcd}[row sep=large]
& 0 \arrow[d, dotted] &  0 \arrow[d, dotted] & 0 \arrow[d, dotted] & \\
0 \arrow[r]& \Omega(A)\otimes_A \ker{f}  \arrow[r, "\kappa_{\ker{f}}"] \arrow[d, dotted] & A\otimes_R \ker{f} \arrow[r, "\mu_{\ker{f}}"] \arrow[d, dotted] & \ker{f} \arrow[r] \arrow[d, dotted]& 0, \\
0 \arrow[r]& \Omega(A)\otimes_A (T\otimes_k V)  \arrow[r, "\kappa_{T\otimes_k V}"] \arrow[d, dotted]& A\otimes_R (T\otimes_k V) \arrow[r, "\mu_{ (T\otimes_k V)  }"] \arrow[d, dotted] & T\otimes_k V \arrow[r]\arrow[d, dotted, "f'"]& 0, \\
0 \arrow[r]& \Omega(A)\otimes_A \im{f}  \arrow[r, "\kappa_{\im{f}}"] \arrow[d, dotted] & A\otimes_R \im{f} \arrow[r, "\mu_{\im{f}}"] \arrow[d, dotted] & \im{f} \arrow[r] \arrow[d, dotted] & 0.\\
 & 0 & 0  & 0 & 
\end{tikzcd}                                
\]
Note that, since $R$ is a semisimple $k$-algebra, $\Omega(A)$ and $\Omega_A\otimes_A L$ are projective modules for all $A$-modules $L$.

Applying ${\rm Hom}_A(?,N)$, we obtain the following commutative diagram
\[
\begin{tikzcd}[row sep=large, column sep=small]
  \Hom_R(\im{f},N) \arrow[r]\arrow[d] &  \Hom_A(\Omega_A\otimes_A \im{f}, N) \arrow[d,"(\Omega_A\otimes f')^*"]\arrow[r] & \Ext_A(\im{f},N) \arrow[d]\arrow[r] & 0   \\
  \Hom_R(T\otimes_k V, N) \arrow[d]\arrow[r,"\kappa_{T\otimes_k V}^*"] & \Hom_A(\Omega_A\otimes_A (T\otimes_k V), N) \arrow[d]\arrow[r] & \Ext_A(T\otimes_k V, N) \arrow[d]\arrow[r] & 0\\  
 \Hom_R(\ker{f},N) \arrow[r, "\overline{\kappa_{T\otimes_k V}^*}"]\arrow[d] &  \Hom_A(\Omega_A\otimes_A \ker{f}, N) \arrow[r]\arrow[d] & \Ext_A(\ker{f},N)\arrow[r]& 0. \\
             0                         & 0                                                               & 
\end{tikzcd}
\]

Now we consider the standard projective resolution of $\tilde{M}$ of Theorem \ref{standardRes}
\[
P_\bullet(\tilde{M}) \to \tilde{M} \to 0
\]
and the induced cochain complex $\mathsf{C}:=\Hom_B\big(P_\bullet(\tilde{M}),\tilde{N}\big)$:
\[
\mathsf{C}:\ldots \to 0 \to \Hom_{\tilde{R}}(\tilde{M},\tilde{N}) \to \Hom_A\big(\Omega_A\otimes_A M,N\big)\oplus \Hom_R\big(T\otimes_k V,N\big)
\]
\[
\xto{h^*}\Hom_A\big(\Omega_A\otimes_A (T\otimes_k V),N\big)\to 0 \to ...
\]
Then $H^i(\mathsf{C})=\Ext_{A[T]}^i(\tilde{M},\tilde{N})$ for $i=0,1,2$.

We have
\begin{eqnarray*}
h^* & = &  \Hom_{A[T]}(h,\tilde{N})\\
    & = & \Hom_{A[T]}\left( [\kappa_{T\otimes_k V}\;\;\;\; \Omega_A\otimes f ]^\top ,\tilde{N}\right)\\
    & = & \big[\kappa_{T\otimes V}^*\;\;\;\; (\Omega_A\otimes f)^* \big].
\end{eqnarray*}

Moreover, we have the following commutative diagrams:
\[
\begin{tikzcd}[row sep=large]
T\otimes_k V\arrow[r, "f"]\arrow[d, two heads, "f'"'] & M    \\
\im{f} \arrow[ur, hook , "\iota"']  
\end{tikzcd}   ~  \overset{\Omega_A\otimes_A ?}{\leadsto} ~                             
\begin{tikzcd}[row sep=large]
\Omega_A\otimes_A(T\otimes_k V)\arrow[d, two heads, "\Omega_A\otimes f'"']\arrow[r, "\Omega_A\otimes f"]\arrow[d] & \Omega_A\otimes_A M    \\
 \Omega_A\otimes_A\im{f} \arrow[ur, hook, "\Omega_A\otimes\iota"']  
\end{tikzcd} 
\] 
\[
\overset{\Hom_A(?,N)}{\leadsto}~
\begin{tikzcd}[row sep=large]
\Hom_A(\Omega_A\otimes_A(T\otimes_k V),N)  &\arrow[l, "(\Omega_A\otimes f)^*"'] \Hom_A(\Omega_A\otimes_A M,N) \arrow[dl,two heads, "(\Omega_A\otimes\iota)^*"]   \\
\Hom_A(\Omega_A\otimes_A\im{f},N). \arrow[u,hook, "(\Omega_A\otimes f')^*"]   
\end{tikzcd}                                
\]

Thus we obtain:
\begin{equation} \label{eq:HilfsGleichungx}
\im{(\Omega_A\otimes f')^*} ~~=~~ \im{(\Omega_A\otimes f)^*}.
\end{equation}

We sum up and obtain finally:
\begin{eqnarray*}
\Ext_{A[T]}^2(\tilde{M},\tilde{N}) & = &\Hom_A\big(\Omega_A\otimes_A (T\otimes_k V),N\big) \big/ \im{h^*} \\
							  & = & \Hom_A\left(\Omega_A\otimes_A (T\otimes_k V),N\right) /  \im{[\kappa_{T\otimes V}^*\;\;\;\; (\Omega_A\otimes f)^* ]} \\
							  &\overset{(\ref{eq:HilfsGleichungx})}{=} & \left(\Hom_A\left(\Omega_A\otimes_A (T\otimes_k V),N\right) / \im{(\Omega_A\otimes f')^*} \right) / \im{\overline{\kappa_{T\otimes V}^*}} \\
							  & = & \Hom_A(\Omega_A\otimes_A \ker{f}, N)  /  \im{\overline{\kappa_{T\otimes V}^*}} \\
							  & = &\Ext_A(\ker{f},N).
\end{eqnarray*}

We thus arrive at the folllowing description of $\Ext_{A[T]}^2$:
\begin{thm} \label{Ext2Symmetrie} $ $\\\
For $A[T]$-modules $\tilde{M}=\big(M,V,f:T\otimes_kV\to M\big)$, $\tilde{N}=\big(N,W,g:T\otimes_kW\to N\big)$ there is an isomorphism:
\[
\Ext^2_{A[T]}(\tilde{M},\tilde{N}) ~\xto{\sim}~ \Ext_A(\ker f,N).
\]
\end{thm}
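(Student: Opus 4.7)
The plan is to read off $\Ext^2_{A[T]}(\tilde M,\tilde N)$ directly from the standard projective resolution constructed in Theorem \ref{standardRes} and then identify the resulting cokernel with $\Ext_A(\ker f, N)$ by an iterated quotient using the Eilenberg sequences for $\ker f$, $T\otimes_k V$ and $\im f$.

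First I would apply $\Hom_{A[T]}(-,\tilde N)$ to the resolution $P_\bullet(\tilde M)\to\tilde M$. Using the standard adjunctions that collapse $\Hom_{A[T]}(A[T]\otimes_{\tilde R} -,\tilde N)$ and $\Hom_{A[T]}((A\otimes_R X\otimes_R A)\otimes_A -,\tilde N)$ to $\Hom$'s over simpler rings, the induced degree-$2$ differential is controlled by the map $h$ from Theorem \ref{standardRes}. Its explicit formula lets me decompose $h^{*}$ as $[\kappa_{T\otimes_k V}^{*},\,(\Omega_A\otimes f)^{*}]$, so $\Ext^2_{A[T]}(\tilde M,\tilde N)$ is the quotient of $\Hom_A(\Omega_A\otimes_A(T\otimes_k V),N)$ by the sum of the images of these two maps.

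The next step is to replace $(\Omega_A\otimes f)^{*}$ by $(\Omega_A\otimes f')^{*}$, where $f=\iota\circ f'$ is the canonical factorization through $\im f$. Since $\Omega_A$ is right-$A$-flat (by the right-split Eilenberg sequence) and $\Omega_A\otimes_A L$ is always projective (as $R$ is semisimple), the map $(\Omega_A\otimes\iota)^{*}$ is surjective; combined with the factorization $(\Omega_A\otimes f)^{*}=(\Omega_A\otimes f')^{*}\circ(\Omega_A\otimes\iota)^{*}$, this yields the equality recorded in (\ref{eq:HilfsGleichungx}). Having performed this replacement I would quotient in two stages. Quotienting first by $\im(\Omega_A\otimes f')^{*}$: since $\Omega_A\otimes f'$ is surjective with kernel $\Omega_A\otimes_A\ker f$, and projectivity of the tensor products keeps $\Hom_A(-,N)$ exact on that sequence, the resulting quotient is naturally $\Hom_A(\Omega_A\otimes_A\ker f,N)$. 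Quotienting second by the residual image of $\kappa_{T\otimes_k V}^{*}$, which by naturality of the Eilenberg sequence with respect to $\ker f\hookrightarrow T\otimes_k V$ reduces to the image of $\kappa_{\ker f}^{*}$, the Eilenberg sequence for $\ker f$ then identifies this last quotient with $\Ext_A(\ker f,N)$.

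The main obstacle is establishing the naturality invoked in the final step: that the restriction of $\kappa_{T\otimes_k V}^{*}$ after the first quotient agrees with $\kappa_{\ker f}^{*}$. This is essentially a diagram chase in the $3\times 3$ array of Eilenberg sequences displayed just before the theorem, but one must track carefully which $\Hom$'s are over $A$ and which over $R$, and verify that right-flatness of $\Omega_A$ truly produces the required exact columns. Once this compatibility is nailed down, the chain of equalities reads off essentially mechanically.
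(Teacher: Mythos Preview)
Your proposal is correct and follows essentially the same route as the paper: compute $\Ext^2$ as the cokernel of $h^*=[\kappa_{T\otimes_k V}^*,\ (\Omega_A\otimes f)^*]$ from the standard resolution, invoke (\ref{eq:HilfsGleichungx}) to trade $(\Omega_A\otimes f)^*$ for $(\Omega_A\otimes f')^*$, then take the iterated quotient and read off $\Ext_A(\ker f,N)$ from the last row of the $\Hom_A(-,N)$-image of the $3\times 3$ Eilenberg diagram. The only cosmetic difference is that the paper packages your ``residual image of $\kappa_{T\otimes_k V}^*$ equals the image of $\kappa_{\ker f}^*$'' as the map it denotes $\overline{\kappa_{T\otimes_k V}^*}$ in that diagram.
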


In particular, if $T$ is projective, so is $\ker{f}$, thus ${\rm Ext}^2_{A[T]}$ vanishes identically. In other words, $A[T]$ is again hereditary in this case.

\begin{mydef} We call an $A[T]$-module $\tilde{M}=\big(M,V,f:T\otimes_kV\to M\big)$ \textit{full}, if $f$ is a surjective map.
\end{mydef}

\begin{thm} \label{ext2aufvolle} Assume $\Ext_A(T,T)=0$. Then, for full $A[T]$-modules $\tilde{M}, \tilde{N}$, the vanishing property
\[
\Ext_{A[T]}^2(\tilde{M},\tilde{N}) = 0
\] holds.
\end{thm}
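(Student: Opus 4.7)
The plan is to combine the description of $\Ext^2_{A[T]}$ provided by Theorem \ref{Ext2Symmetrie} with the hereditariness of $A=kQ$, and to exploit the fullness assumption twice—once for $\tilde M$ and once for $\tilde N$—in order to transport the rigidity of $T$ into the desired vanishing. Concretely, Theorem \ref{Ext2Symmetrie} identifies
\[
\Ext^2_{A[T]}(\tilde M,\tilde N) \;\cong\; \Ext_A(\ker f, N),
\]
so the whole task reduces to showing that $\Ext_A(\ker f, N) = 0$ in the category of $A$-modules.

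Next, fullness of $\tilde M$ means that $f\colon T\otimes_k V\to M$ is surjective, yielding a short exact sequence of $A$-modules
\[
0 \to \ker f \to T\otimes_k V \to M \to 0.
\]
Applying $\Hom_A(-,N)$ and using that $A$ is hereditary (so $\Ext^2_A(M,N)=0$), I would extract a surjection
\[
\Ext_A(T\otimes_k V, N) \;\twoheadrightarrow\; \Ext_A(\ker f, N).
\]
Since $V$ is a plain $k$-vector space, $T\otimes_k V \cong T^{\oplus \dim V}$ as an $A$-module, so $\Ext_A(T\otimes_k V, N) \cong \Ext_A(T,N)^{\oplus \dim V}$, and it now suffices to prove $\Ext_A(T,N)=0$.

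For this last step I would use fullness of $\tilde N$ in exactly the symmetric way: the surjection $g\colon T\otimes_k W\to N$ gives an exact sequence $0\to \ker g \to T\otimes_k W \to N \to 0$, and applying $\Hom_A(T,-)$ together with hereditariness produces a surjection
\[
\Ext_A(T, T\otimes_k W) \;\twoheadrightarrow\; \Ext_A(T, N).
\]
The source equals $\Ext_A(T,T)^{\oplus \dim W}$, which vanishes by the rigidity hypothesis $\Ext_A(T,T)=0$. Hence $\Ext_A(T,N)=0$, so $\Ext_A(\ker f,N)=0$, and Theorem \ref{Ext2Symmetrie} finishes the argument.

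There is no serious obstacle here; rather, the point worth highlighting is that both fullness assumptions are genuinely used—fullness of $\tilde M$ is what turns $\ker f$ into a first syzygy of something built from $T$, while fullness of $\tilde N$ is what allows the rigidity $\Ext_A(T,T)=0$ to propagate to $\Ext_A(T,N)=0$. The hereditariness of $A$ intervenes at both stages to kill the obstructing $\Ext^2_A$ terms in the long exact sequences.
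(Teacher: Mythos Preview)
Your proof is correct and follows essentially the same approach as the paper: reduce via Theorem~\ref{Ext2Symmetrie} to $\Ext_A(\ker f,N)=0$, then use the two short exact sequences coming from fullness together with hereditariness of $A$ and the hypothesis $\Ext_A(T,T)=0$. The only cosmetic difference is the order in which the two exact sequences are applied---the paper first uses fullness of $\tilde N$ (applying $\Hom_A(\ker f,-)$) and then fullness of $\tilde M$ (applying $\Hom_A(-,T\otimes_k W)$), whereas you proceed in the opposite order; both arrive at $\Ext_A(T\otimes_k V,T\otimes_k W)=0$ as the decisive vanishing.
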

\begin{proof}Write $\tilde{M}=\big(M,V,f:T\otimes_kV\to M\big)$ and $\tilde{N}=\big(N,W,g:T\otimes_kW\to N\big)$.
We consider
\[
0\to \ker{g} \to T\otimes_k W \xto{g}  N \to 0
\]
and apply $\Hom_A(\ker{f},?)$  and obtain the long exact sequence
\[
0\to\Hom_A(\ker{f},\ker{g})\to\Hom_A(\ker{f},T\otimes_k W)\to \Hom_A(\ker{f},N)
\]
\[
\to \Ext_A(\ker{f},\ker{g})\to\Ext_A(\ker{f},T\otimes_k W)\to \Ext_A(\ker{f},N) \to 0.
\]
$ $\\
Now we will show that $\Ext_A(\ker{f},N)=0$ holds by showing that 
\[
\Ext_A(\ker{f},T\otimes_k W)=0
\] 
holds. We also have:
\[
0\to \ker{f} \to T\otimes_k V \xto{f}  M \to 0.
\]
Applying $\Hom_A(?,T\otimes_k W)$ to this, we obtain the long exact sequence:
\[
0\to\Hom_A(M,T\otimes_k W)\to\Hom_A(T\otimes_k V,T\otimes_k W)\to \Hom_A(\ker{f},T\otimes_k W)
\]
\[
\to \Ext_A(M,T\otimes_k W)\to\Ext_A(T\otimes_k V,T\otimes_k W)\to\Ext_A(\ker{f},T\otimes_k W)\to 0.
\]
Since $\Ext_A(T,T)=0$, in particular $\Ext_A(T\otimes_k V,T\otimes_k W)=0$ holds. 

So $\Ext_A(\ker{f},T\otimes_k W)=0$ and therefore $\Ext_A(\ker{f},N)=0$. Using Theorem \ref{Ext2Symmetrie}, we conclude the proof.
\end{proof}

\subsection{Derivations and the Euler form of \texorpdfstring{$A[T]$}{A[T]}}$ $\\
Let $\tilde{M}=\big(M,V,f:T\otimes_kV\to M\big), \tilde{N}=\big(N,W,g:T\otimes_kW\to N\big)$ be $A[T]$-modules. To shorten notation, we define $B=A[T]$. In this section we determine the dimension of a space of derivations
\[
\dim \Der_{\tilde{R}}\big(B,\Hom_{\tilde{R}}(\tilde{N},\tilde{M})\big). 
\]

To do this, we consider the canonical exact sequence
\[
0\to\Hom_B(\tilde{N},\tilde{M})\to \Hom_{\tilde{R}}(\tilde{N},\tilde{M}) \xto c \Der_{\tilde{R}}\big(B,\Hom_{\tilde{R}}(\tilde{N},\tilde{M})\big)
\]
\[\to\Ext_B(\tilde{N},\tilde{M})\to 0,
\]
where $c$ is defined as
\[
\big(\beta_i:\tilde{N}_i\to \tilde{M}_i\big)_{i\in Q_0}
\mapsto \sum_{\substack{\alpha\in Q_1\\ \alpha:i\to j}} 
\tilde{M}_\alpha\beta_i - \beta_j\tilde{N}_\alpha.
\]
We obtain the equality:
\begin{center}
$\dim \Der_{\tilde{R}}\big(B,\Hom_{\tilde{R}}(\tilde{N},\tilde{M})\big)$ \\
$=$ \\
$-\left( \dim \Hom_B(\tilde{N},\tilde{M}) - \dim \Ext_B(\tilde{N},\tilde{M}) \right) + \Hom_{\tilde{R}}(\tilde{N},\tilde{M})$.
\end{center}
$ $\\
On the other hand, we have the following description:

We consider the standard projective resolution of $\tilde{N}$ (Theorem \ref{standardRes})
\[
P_\bullet(\tilde{N})\to \tilde{N } \to 0
\]
and  the induced cochain complex $\mathsf{C}:=\Hom_B\big(P_\bullet(\tilde{N}),\tilde{M}\big)$:
\[
\mathsf{C}:\ldots \to 0 \to \Hom_{\tilde{R}}(\tilde{N},\tilde{M}) \to \Hom_A\big(\Omega_A\otimes_A N,M\big)\oplus \Hom_R\big(T\otimes_k W,M\big)
\]
\[
\xto{h^*}\Hom_A\big(\Omega_A\otimes_A (T\otimes_k W),M\big)\to 0 \to ...
\]
Then $H^i(\mathsf{C})=\Ext_{A[T]}^i(\tilde{N},\tilde{M})~~~(i=0,1,2)$.

This way we obtain the equality:
\begin{center}
$\dim \Hom_B(\tilde{N},\tilde{M})-\dim\Ext_B(\tilde{N},\tilde{M}) + \dim
\Ext^2_B(\tilde{N},\tilde{M})$\\
$=$ \\
$\dim\Hom_{\tilde{R}}(\tilde{N},\tilde{M}) - \dim\Hom_A\big(\Omega_A\otimes_A N,M\big)-\dim\Hom_R\big(T\otimes_k W,M\big) + \dim\Hom_A\big(\Omega_A\otimes_A (T\otimes_k W),M\big)$.
\end{center}
$ $\\
We easily determine:
\begin{eqnarray*}
\dim\Hom_A\big(\Omega_A\otimes_A N,M\big) & = & \sum_{\substack{\alpha\in Q_1\\ \alpha:i\to j}} \dim N_i\cdot\dim M_j, \\
\dim\Hom_R\big(T\otimes_k W,M\big) & = & \dim W\cdot \sum_{i\in Q_0} \dim T_i \cdot \dim M_i, \\
\dim\Hom_A\big(\Omega_A\otimes_A (T\otimes_k W),M\big) & = & \dim W \cdot \sum_{\substack{\alpha\in Q_1\\ \alpha:i\to j}} \dim T_i\cdot\dim M_j.
\end{eqnarray*}
By using the characterization of $\Ext_{A[T]}^2$ of Theorem \ref{Ext2Symmetrie}, we end up in:
\begin{thm}\label{DerivationAllg} For finite-dimensional $A[T]$-modules $\tilde{M}=\big(M,V,f:T\otimes_kV\to M\big)$ and $\tilde{N}=\big(N,W,g:T\otimes_kW\to N\big)$ we have 
\[
\dim \Der_{\tilde{R}}\big(B,\Hom_{\tilde{R}}(\tilde{N},\tilde{M})\big) ~~=~~ \dim \Der_{R}\big(A,\Hom_{R}(N,M)\big)
\]
\[
~+~\dim W \cdot \langle T, M\rangle_A ~~+~~ \dim \Ext_A(\ker{g},M),
\]
where $\langle .,.\rangle_A$ denotes the homological Euler-form of $A$.
\end{thm}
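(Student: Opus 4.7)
The preceding discussion has already assembled the two dimension equalities needed, so the plan is simply to combine them and identify the summands. The first equality came from the canonical exact sequence
\[
0\to\Hom_B(\tilde{N},\tilde{M})\to\Hom_{\tilde{R}}(\tilde{N},\tilde{M})\to\Der_{\tilde{R}}\bigl(B,\Hom_{\tilde{R}}(\tilde{N},\tilde{M})\bigr)\to\Ext_B(\tilde{N},\tilde{M})\to 0,
\]
and the second came from the Euler characteristic of the cochain complex $\mathsf{C}=\Hom_B(P_\bullet(\tilde{N}),\tilde{M})$ associated to the standard projective resolution of Theorem \ref{standardRes}. I would first solve the canonical sequence for $\dim\Hom_B(\tilde{N},\tilde{M})-\dim\Ext_B(\tilde{N},\tilde{M})$ and substitute into the second equality. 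The $\dim\Hom_{\tilde{R}}(\tilde{N},\tilde{M})$ terms cancel and leave
\[
\dim\Der_{\tilde{R}}\bigl(B,\Hom_{\tilde{R}}(\tilde{N},\tilde{M})\bigr)=\dim\Hom_A(\Omega_A\otimes_A N,M)+\dim\Hom_R(T\otimes_kW,M)
\]
\[
-\,\dim\Hom_A\bigl(\Omega_A\otimes_A(T\otimes_kW),M\bigr)+\dim\Ext_B^2(\tilde{N},\tilde{M}).
\]

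Next I would identify each of the four summands with its counterpart in the target formula. By Theorem \ref{Ext2Symmetrie}, the fourth term equals $\dim\Ext_A(\ker g,M)$. For the first term, applying the analogous canonical exact sequence for $A$ (using the Eilenberg sequence $\mathfrak{E}(A)$, which is possible because $R$ is semisimple and $\Omega_A$ is a projective $A$-bimodule) and the fact that $A$ is hereditary yields
\[
\dim\Der_R\bigl(A,\Hom_R(N,M)\bigr)=\dim\Hom_R(N,M)-\langle N,M\rangle_A=\dim\Hom_A(\Omega_A\otimes_A N,M),
\]
where the last equality is immediate from the explicit formulas and the definition $\langle N,M\rangle_A=\sum_i\dim N_i\dim M_i-\sum_{\alpha\colon i\to j}\dim N_i\dim M_j$.

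Finally I would combine the two middle summands using the explicit dimension formulas listed just before the theorem:
\[
\dim\Hom_R(T\otimes_kW,M)-\dim\Hom_A\bigl(\Omega_A\otimes_A(T\otimes_kW),M\bigr)
\]
\[
=\dim W\cdot\Bigl(\sum_{i\in Q_0}\dim T_i\dim M_i-\sum_{\alpha\colon i\to j}\dim T_i\dim M_j\Bigr)=\dim W\cdot\langle T,M\rangle_A.
\]
Substituting these three identifications into the displayed identity gives exactly the claimed formula. There is no real obstacle here: the content of the proof lies entirely in the preceding setup (the standard resolution and the description of $\Ext_{A[T]}^2$), and the remaining work is purely bookkeeping with Euler characteristics, with the only mild subtlety being the correct invocation of the hereditary case for $A$ to rewrite $\dim\Hom_A(\Omega_A\otimes_A N,M)$ as $\dim\Der_R(A,\Hom_R(N,M))$.
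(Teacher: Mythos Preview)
Your proposal is correct and follows essentially the same approach as the paper: combine the canonical exact sequence with the Euler characteristic of $\Hom_B(P_\bullet(\tilde{N}),\tilde{M})$, cancel $\dim\Hom_{\tilde{R}}(\tilde{N},\tilde{M})$, invoke Theorem~\ref{Ext2Symmetrie} for the $\Ext^2$ term, and identify the remaining pieces via the explicit dimension formulas. Your explicit justification that $\dim\Hom_A(\Omega_A\otimes_A N,M)=\dim\Der_R(A,\Hom_R(N,M))$ via the analogous canonical sequence for the hereditary algebra $A$ makes fully precise a step the paper leaves implicit.
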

\begin{crl}\label{Derivation}$ $\\If we assume $\Ext_A(T,T)=0$, then for  full $A[T]$-modules $\tilde{M}$ and $\tilde{N}$ we have
\[
\dim \Der_{\tilde{R}}\big(B,\Hom_{\tilde{R}}(\tilde{N},\tilde{M})\big) ~~=~~ \dim \Der_{R}\big(A,\Hom_{R}(N,M)\big) + \dim W \cdot \langle T, M\rangle_A.
\]
\end{crl}
\begin{proof}
The claim follows immediately using Theorem \ref{DerivationAllg} and \ref{ext2aufvolle}.
\end{proof}
We notate a dimension vector $\vec{d}\in\cN^{\hat{Q}_0}$ as a tuple $(s,d)$, where $s=\vec{d}_{\infty}$ and $d\in\cN^{Q_0}$.
\begin{mydef}For dimension vectors $(s,d),(s',d')\in\cN^{\hat{Q}_0}$ we set
\[
\langle (s,d),(s',d')\rangle_{A[T]} := ss' - s\cdot\langle \dimv T, d'\rangle_Q
+\langle d,d' \rangle_Q.
\]
\end{mydef}
\begin{crl}\label{eulerform_folgerung} $ $\\
For the homological {Euler}-form of $A[T]$ the following identity holds: 
\[
\langle \tilde{M},\tilde{N}\rangle_{A[T]} ~=~ \langle \dimv\tilde{M},\dimv\tilde{N}\rangle_{A[T]},
\]
where $\tilde{M},\tilde{N}$ are (finite-dimensional) $A[T]$-modules and $\langle .,.\rangle_Q$ denotes the {Euler}-form of $Q$.
\end{crl}
\begin{proof}
The identity follows from the above discussion.
\end{proof}

\section{Varieties of representations of one-point extensions}
For all standard notions on varieties of representations of algebras, we refer to \cite{LB}. Let $(s,d)$ be a dimension vector of $\hat{Q}$. Using the isomorphism $A[T]\simeq k\hat{Q}/(\Rel)$, we can realize the 
variety of representations of $A[T]$ with dimension $(s,d)$ as a ({Zariski}-)closed subvariety of the variety of representations of $\hat{Q}$ with dimension $(s,d)$ denoted by $\Rep_{(s,d)}(\hat{Q})$:  
\[
\Rep_{(s,d)}(A[T]) \overset{\mathrm{clsd}}\subseteq \Rep_{(s,d)}(\hat{Q}) .
\] 

\subsection{The Zariski-tangent space}$ $\\
For $\tilde{M}\in\Rep_{(s,d)}(A[T])$ we have (see \cite[Example 3.10.]{LB}):
\[
\mathsf{T}_{\tilde{M}}\Rep_{(s,d)}(A[T])~=~\Der_{\tilde{R}}\big(A[T],\End_{\tilde{R}}(\tilde{M})\big). 
\]
We set $t:=\dimv \,T$ and get by using Theorem \ref{DerivationAllg}:

\begin{thm}\label{tangentspace} For $\tilde{M}=\big(M,V,f:T\otimes_kV\to M\big)\in\Rep_{(s,d)}(A[T])$ we have
\[ 
\dim \mathsf{T}_{\tilde{M}}\Rep_{(s,d)}\big(A[T]\big)~=~ \dim\Rep_d(Q) + s\cdot\langle t,d\rangle_Q + \dim \Ext_A(\ker{f},M).
\]
\end{thm}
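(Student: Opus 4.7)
The plan is to reduce the statement to the general derivation formula of Theorem~\ref{DerivationAllg} via the standard tangent space identification recalled immediately before the statement. Since the Zariski tangent space of $\Rep_{(s,d)}(A[T])$ at $\tilde{M}$ is $\Der_{\tilde{R}}(A[T], \End_{\tilde{R}}(\tilde{M}))$, everything comes down to specializing Theorem~\ref{DerivationAllg} to $\tilde{N} = \tilde{M}$ and reinterpreting the three resulting summands.

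First I would set $\tilde{N} = \tilde{M}$ in Theorem~\ref{DerivationAllg}, so that $N = M$, $W = V$ and $g = f$, obtaining
\[
\dim \Der_{\tilde{R}}\bigl(A[T], \End_{\tilde{R}}(\tilde{M})\bigr) = \dim \Der_{R}\bigl(A, \End_{R}(M)\bigr) + \dim V \cdot \langle T, M\rangle_A + \dim \Ext_A(\ker f, M).
\]
The third summand is already in the desired form. For the second, $\dim V = s$ by the convention $(s,d)$ for dimension vectors of $\hat{Q}$, and $\langle T, M\rangle_A = \langle t, d\rangle_Q$, since the homological Euler form of the hereditary algebra $A=kQ$ depends only on dimension vectors and coincides with the combinatorial Euler form of $Q$ on them.

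For the first summand I would apply the same tangent space principle to the hereditary algebra $A$ itself: the affine variety $\Rep_d(Q)$ has tangent space $\Der_R(A, \End_R(M))$ at each point $M$, and being an affine space all of its tangent spaces have dimension $\dim \Rep_d(Q)$. Substituting these three identifications into the displayed equation yields the asserted formula. The hard work is already packaged inside Theorem~\ref{DerivationAllg} (and, via that theorem, inside Theorem~\ref{Ext2Symmetrie}); the present statement is only a specialization, so there is no genuine obstacle, merely the bookkeeping needed to match the conventions for the dimension vector $(s,d)$ and the Euler form of $Q$.
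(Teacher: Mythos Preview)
Your proposal is correct and matches the paper's own argument: the paper simply cites the tangent space identification $\mathsf{T}_{\tilde{M}}\Rep_{(s,d)}(A[T]) = \Der_{\tilde{R}}(A[T],\End_{\tilde{R}}(\tilde{M}))$ and then invokes Theorem~\ref{DerivationAllg} with $\tilde{N}=\tilde{M}$. Your unpacking of the three summands (in particular $\dim \Der_R(A,\End_R(M)) = \dim \Rep_d(Q)$ via the affine-space tangent identification for the hereditary case) is exactly the bookkeeping the paper leaves implicit.
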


\subsection{A well-behaved subvariety in the rigid case}$ $\\
We consider 
\[
\Rep_{(s,d)}\voll\big(A[T]\big)~:=~ \bigg\lbrace (f,M)\;:\; M\in\Rep_d(Q) ,\; \, \rank{f}=\gamma_{T^s,d} \bigg\rbrace
\]
\[
\overset{\text{open}}{\subseteq} \, \Rep_{(s,d)}\big(A[T]\big),
\]
where $\gamma_{T^s,d}\in\cN^{Q_0}$ denotes the \textit{general rank} of homomorphism from $T^s$ to a representation of dimension $d$ (see \cite[Section 5]{S}).

\begin{thm}\label{g1Thm1Version}
If we assume $\gamma_{T^s,d}=d$ and $\Ext_A(T,T)=0$, then $\Rep_{(s,d)}\voll\big(A[T]\big)$ 
is smooth and every component has dimension
\[
\dim \Rep_d(Q) + s\cdot\langle \dimv\, T,d\rangle_Q,
\]
i.e. $\Rep_{(s,d)}\voll\big(A[T]\big)$ is a local complete intersection. 
\end{thm}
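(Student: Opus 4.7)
The plan is to compare the tangent space dimension formula of Theorem~\ref{tangentspace} against a lower bound on $\dim\Rep_{(s,d)}(A[T])$ obtained by counting the defining relations of $A[T]\simeq k\hat Q/(\Rel)$ inside $\Rep_{(s,d)}(\hat Q)$. Equality of the two will simultaneously give smoothness, the asserted dimension, and the local complete intersection property; the key vanishing input is $\Ext^2_{A[T]}(\tilde M,\tilde M)=0$ on the full locus, which is already in hand.

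First I would use the hypothesis $\gamma_{T^s,d}=d$ to observe that a generic homomorphism $T^s\to M$ is surjective, so $\Rep_{(s,d)}\voll(A[T])$ is nonempty and coincides exactly with the locus of full modules of dimension $(s,d)$. In particular every point of the full locus is a full $A[T]$-module in the sense of the definition preceding Theorem~\ref{ext2aufvolle}.

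Next, for the lower bound, I would note that $\Rep_{(s,d)}(\hat Q)$ is an affine space of dimension $\dim\Rep_d(Q)+s\sum_{i}t_id_i$, as the extra arrows $\rho_{l,(i)}:\infty\to i$ contribute $t_i$ matrices of size $d_i\times s$ each. The relations (\ref{eq:relationGabriel}) cut $\Rep_{(s,d)}(A[T])$ out of this affine space by precisely $s\sum_{\alpha:i\to j}t_id_j$ scalar equations, so by Krull's Hauptidealsatz every irreducible component of $\Rep_{(s,d)}(A[T])$ has dimension at least
\[
\dim\Rep_d(Q)+s\sum_i t_id_i-s\sum_{\alpha:i\to j}t_id_j~=~\dim\Rep_d(Q)+s\cdot\langle t,d\rangle_Q.
\]

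Finally, for the matching tangent space computation, I would specialize Theorem~\ref{Ext2Symmetrie} to $\tilde N=\tilde M$ and combine with Theorem~\ref{ext2aufvolle}: under $\Ext_A(T,T)=0$, for a full $\tilde M=(M,V,f)$ one gets $\Ext_A(\ker f,M)\cong\Ext^2_{A[T]}(\tilde M,\tilde M)=0$. Plugging this into Theorem~\ref{tangentspace} yields $\dim\mathsf{T}_{\tilde M}\Rep_{(s,d)}(A[T])=\dim\Rep_d(Q)+s\cdot\langle t,d\rangle_Q$, which coincides with the lower bound. Therefore $\tilde M$ is a smooth point of the ambient scheme $\Rep_{(s,d)}(A[T])$, each component of $\Rep_{(s,d)}\voll(A[T])$ has the stated dimension, and since the codimension equals the number of defining equations, $\Rep_{(s,d)}\voll(A[T])$ is a local complete intersection. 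The only real bookkeeping obstacle is verifying the equation count for the relations; once the two dimensions are matched, everything else is formal.
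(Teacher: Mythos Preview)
Your proposal is correct and follows the same approach as the paper: a lower bound on component dimension via Krull's Hauptidealsatz applied to the explicit relations~(\ref{eq:relationGabriel}), matched against the tangent space dimension from Theorem~\ref{tangentspace}. You are simply more explicit than the paper about the equation count and about why the $\Ext_A(\ker f,M)$ term in Theorem~\ref{tangentspace} vanishes (via Theorems~\ref{Ext2Symmetrie} and~\ref{ext2aufvolle}), whereas the paper leaves these as immediate consequences for the reader.
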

\begin{proof} By counting the explicit defining polynomial equations 
\[
\Rep_{(s,d)}(A[T]) \overset{\mathrm{clsd}}{\subseteq}\Rep_{(s,d)}(\hat{Q}),
\]
induced by (\ref{eq:relationGabriel}), we find that every (nonempty) component of $\Rep_{(s,d)}(A[T])$ has dimension $\geq \dim\,\Rep_d(Q)+s\cdot\langle \dimv \,T,d\rangle_Q$. 

On the other hand, for each $\tilde{M}\in\Rep\voll_{(s,d)}(A[T])$ we have
\[
\dim_{\tilde{M}}\Rep\voll_{(s,d)}(A[T]) \leq \dim \mathsf{T}_{\tilde{M}}\Rep_{(s,d)}(A[T]).
\]
The claim follows immediately by using the dimension formula in Theorem \ref{tangentspace}.
\end{proof}

\subsection{On homomorphisms from a fixed representation}$ $\\
We denote the open dense subset 
\[
\bigg\{ M\in \Rep_d(Q)\;:\; \dim\Hom_A(T,M)=\hom(T,d) ~~\text{und}~~  \gamma_{T,M}=\gamma_{T,d} \bigg\}
\]
of $\Rep_d(Q)$ by $\mathcal{V}_{T,d}$. Here, $\hom(T,d)$ is the dimension of the space of homomorphisms from the fixed representation $T$ to a general representation of dimension vector $d$ (see \cite{CB4}). Moreover, $\gamma_{T,M}$ is the unique maximal rank of homomorphisms from $T$ to $M$. 

We consider the regular map
\[
\tilde{\pi}:\Rep_{(1,d)}\voll\big(A[T]\big) \to \Rep_d(Q),\,(f,M)\mapsto M,
\]
which induces a regular map
\[
\pi:\mathcal{V} \to \mathcal{V}_{T,d},
\]
where $\mathcal{V}$ is the open preimage $\tilde{\pi}\inv(\mathcal{V}_{T,d})\subseteq \Rep_{(1,d)}\voll\big(A[T]\big)$.

Obviously $\mathcal{V}_{T,d}$ is irreducible and the fibers of $\pi$ are irreducible of dimension $\mathrm{hom}(T,d)$. So there is a unique irreducible component $\mathcal{V}_0$ of $\mathcal{V}$ of maximal dimension which dominates $\pi$, that is, we have $\overline{\pi(\mathcal{V}_0)} = \mathcal{V}_{T,d}$ (Proposition \ref{faserDimIrredKrit})and 
\[
\dim \mathcal{V} = \dim \mathcal{V}_0=\dim \Rep_d(Q) + \mathrm{hom}(T,d). 
\] 

Since the regular points in $\mathcal{V}$ form an open dense subset, there is regular point $\tilde{M}$ in the irreducible component $\mathcal{V}_0$, and we have:
\begin{eqnarray*}
\dim\mathcal{V}_0  & =  & \dim_{\tilde{M}}\mathcal{V} \\
				   & =  & \dim \mathsf{T}_{\tilde{M}}\mathcal{V} ~=~\dim \mathsf{T}_{\tilde{M}}\Rep_{(1,d)}\voll\big(A[T]\big).
\end{eqnarray*}

Using the description of the tangent space by derivations, we thus find:
\begin{thm}\label{schofieldCBRefindOne}We have
\[
\hom(T,d)~=~\langle t,d\rangle_Q ~+~ \dim \Ext_A(\ker{f},M)
\]
for a general $A$-module homomorphism $f:T\to M$. 
\end{thm}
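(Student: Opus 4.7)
The strategy is to compare two different computations of $\dim\mathcal{V}_0$ obtained in the discussion immediately preceding the theorem. Geometrically, analysing the fibers of $\pi:\mathcal{V}\to\mathcal{V}_{T,d}$ gave
\[
\dim\mathcal{V}_0 ~=~ \dim\Rep_d(Q) + \hom(T,d).
\]
On the other hand, since the smooth locus of $\mathcal{V}_0$ is open and dense, I would pick a smooth point $\tilde{M}=(f,M)\in\mathcal{V}_0$. At such a point $\dim_{\tilde M}\mathcal V_0=\dim\mathsf T_{\tilde M}\mathcal V_0$, and because $\mathcal{V}_0\subseteq\mathcal V$ is open in $\Rep_{(1,d)}\voll(A[T])$, which in turn is open in $\Rep_{(1,d)}(A[T])$, this tangent space coincides with $\dim\mathsf T_{\tilde M}\Rep_{(1,d)}(A[T])$.

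Next, I would apply Theorem \ref{tangentspace} with $s=1$ to this tangent space, obtaining
\[
\dim\mathsf T_{\tilde M}\Rep_{(1,d)}(A[T]) ~=~ \dim\Rep_d(Q) + \langle t,d\rangle_Q + \dim\Ext_A(\ker f,M).
\]
Equating the two expressions for $\dim\mathcal V_0$ and cancelling $\dim\Rep_d(Q)$ on both sides produces the asserted identity.

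Finally, I would justify the interpretation of $f$ as a \emph{general} $A$-module homomorphism. Because $\pi:\mathcal V_0\to\mathcal V_{T,d}$ is dominant with irreducible fibers canonically the vector spaces $\Hom_A(T,M)$ of dimension $\hom(T,d)$, the smooth locus of $\mathcal V_0$ meets a dense open set of fibers, and in each such fiber it is a dense open subset. Consequently the smooth point $\tilde M=(f,M)$ can be chosen so that $M$ lies in a dense open subset of $\Rep_d(Q)$ and $f$ is a general point of $\Hom_A(T,M)$, so that $\ker f$ assumes its generic isomorphism type and $\dim\Ext_A(\ker f,M)$ takes its generic value. I expect this matching of smoothness with genericity to be the only subtle point of the argument; once it is in place, the theorem is a direct consequence of equating the two dimension formulas.
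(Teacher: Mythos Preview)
Your proposal is essentially the paper's argument: compare the fiber-dimension formula $\dim\mathcal V_0=\dim\Rep_d(Q)+\hom(T,d)$ with the tangent-space formula from Theorem~\ref{tangentspace} at a smooth point of $\mathcal V_0$, and cancel. Two small points to tighten: the fibers of $\pi$ are the open subsets of $\Hom_A(T,M)$ consisting of maps of rank $\gamma_{T,d}$ (not the full Hom-space, though this does not affect the dimension), and an irreducible component $\mathcal V_0$ is closed rather than open in $\mathcal V$, so you should choose $\tilde M$ away from the other components so that locally $\mathcal V=\mathcal V_0$ and the tangent spaces agree.
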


As a side remark, we rediscover the following result of {Schofield} \cite{S} and {Crawley-Boevey} \cite{CB4} : 
\begin{crl}\label{schofieldCBRefindTwo}
We have
\[
\hom(T,d)~=~\langle \gamma_{T,d},d\rangle_Q ~+~ \dim \Hom_A(\ker{f},M)
\]
for a general $A$-module homomorphism $f:T\to M$.
\end{crl}
\begin{proof}Use the the {Euler} form of $Q$ and the identity
\[
\dim\Hom_A(\ker{f},M) ~- ~\dim \Ext_A(\ker{f},M) ~~=~~ \langle \dimv\,\ker{f}, d\rangle . 
\]
\end{proof}
\begin{crl}\label{SchofieldHomFormel}$ $\\
If we assume $\gamma_{T,d}=d$ and $\Ext_A(T,T)=0$, then $\hom(T,d)~=~\langle t,d\rangle_Q$.
\end{crl}

\subsection{An irreducible component in the rigid case}$ $\\
We need the following facts from algebraic geometry:
\begin{prp}\label{dichteTeilmengeWotangentFaserKern}
Let $\pi:\mathcal{X}\to\mathcal{Y}$ be a regular map of affine varieties. Then there is an open dense subset $\mathcal{U}\subseteq \mathcal{X}$ such that for all $x\in\mathcal{U}$ 
\[
\mathsf{T}_x\pi\inv\big(\pi(x)\big) = \ker{\mathsf{d}\pi_{x}}.
\]
\end{prp}
\begin{proof}
See for example \cite{HP}.
\end{proof}

\begin{prp}\label{faserDimIrredKrit}
Let $f:\X\to\Y$ be a regular map of quasi-projective varieties.\\
If $\Y$ is irreducible and all fibers of $f$ are irreducible and of same dimension $d$ (in particular $f$ is surjective), then:
\begin{enumerate}
 \item[a)] There is a unique irreducible component $\X_0$ of $\X$ that dominates $\Y$, i.e. $\overline{f(\X_0)}=\Y$.
 \item[b)]Each irreducible component $\X_i$ of $\X$ is a union of fibers of $f$. Its dimension is equal to $\dim\overline{f(\X_i)}+d$.
 \end{enumerate} 
 In particular, we can conclude $\X$ is irreducible if either of the following holds:
 \begin{enumerate}
 \item[i)] $\X$ is equidimensional.
 \item[ii)] $f$ is closed. 
 \end{enumerate}
\end{prp}
\begin{proof}
See \cite{Mu}.
\end{proof}

\begin{thm}\label{homlogieTrifftGeometrie2}$ $\\If we assume $\gamma_{T^s,d}=d$ and $\Ext_A(T,T)=0$, then $\Rep_{(s,d)}\voll\big(A[T]\big)$ is irreducible and smooth of dimension
\[
\dim \Rep_{(s,d)}\voll\big(A[T]\big)~=~ \dim \Rep_d(Q) + s\cdot\langle t,d\rangle_Q .
\]
\end{thm}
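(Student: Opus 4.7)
The plan is to combine the smoothness and pure dimensionality obtained in Theorem \ref{g1Thm1Version} with a fiber analysis of the projection
\[
\tilde{\pi}:\Rep_{(s,d)}\voll\big(A[T]\big)\to\Rep_d(Q),\quad (f,M)\mapsto M.
\]
Smoothness and the dimension claim are immediate from Theorem \ref{g1Thm1Version}; only irreducibility remains to be proved.

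The crucial observation is that, under our hypotheses, the image of $\tilde{\pi}$ is exactly the irreducible open subset $\mathcal{V}_{T^s,d}\subseteq\Rep_d(Q)$. For the non-trivial inclusion, take $(f,M)\in\Rep_{(s,d)}\voll(A[T])$. The surjection $f:T^s\twoheadrightarrow M$ yields a short exact sequence $0\to\ker f\to T^s\to M\to 0$. Since $A$ is hereditary and $\Ext_A(T,T^s)=0$ follows from $\Ext_A(T,T)=0$, applying $\Hom_A(T,-)$ gives $\Ext_A(T,M)=0$, hence $\dim\Hom_A(T,M)=\langle t,d\rangle_Q$. Applying Corollary \ref{SchofieldHomFormel} to the rigid representation $T^s$ (which satisfies $\gamma_{T^s,d}=d$ and $\Ext_A(T^s,T^s)=0$) yields $\hom(T^s,d)=s\langle t,d\rangle_Q$. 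Both defining conditions of $\mathcal{V}_{T^s,d}$ are thus met, so $M\in\mathcal{V}_{T^s,d}$.

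With $\tilde{\pi}$ now viewed as a surjection onto $\mathcal{V}_{T^s,d}$, its fibers are the loci of surjections in the affine spaces $\Hom_A(T^s,M)\cong\Hom_A(T,M)^s$. Since $\dim\Hom_A(T,M)=\langle t,d\rangle_Q$ is constant on $\mathcal{V}_{T^s,d}$, these fibers are irreducible (open in an affine space) of constant dimension $s\langle t,d\rangle_Q$. The total space $\Rep_{(s,d)}\voll(A[T])$ is equidimensional by Theorem \ref{g1Thm1Version}, so Proposition \ref{faserDimIrredKrit}(i) applies and yields irreducibility; the dimension formula then drops out of the fiber dimension computation.

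The main subtlety is the density-type step, ensuring that no irreducible component of $\Rep_{(s,d)}\voll(A[T])$ sits over $\Rep_d(Q)\setminus\mathcal{V}_{T^s,d}$, where $\dim\Hom_A(T,M)$ might jump up. This is resolved by the $\Ext$-vanishing argument above: the very existence of a surjection $T^s\twoheadrightarrow M$ forces $\Ext_A(T,M)=0$ and thereby pins $M$ down inside $\mathcal{V}_{T^s,d}$.
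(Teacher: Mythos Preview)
Your argument is correct, and it is in fact cleaner than the paper's own proof. Both proofs reduce to an application of Proposition~\ref{faserDimIrredKrit} for the projection $\tilde{\pi}$ onto $\Rep_d(Q)$, using the equidimensionality from Theorem~\ref{g1Thm1Version}. The difference lies in how one controls $\dim\Hom_A(T^s,M)$ along the image. The paper takes a differential-geometric detour: it shows that $\mathsf{d}\tilde{\pi}$ is surjective, invokes Proposition~\ref{dichteTeilmengeWotangentFaserKern} to identify $\Hom_A(T^s,M)$ with $\ker\mathsf{d}\tilde{\pi}_{\tilde M}$ on a dense open $\mathcal{U}$, and then restricts to a dense open $\mathcal{V}\subseteq\Rep_{(s,d)}\voll(A[T])$ before applying the fibre criterion; irreducibility of the whole space follows only at the end via $\overline{\mathcal{V}}=\Rep_{(s,d)}\voll(A[T])$. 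You instead observe directly that any surjection $T^s\twoheadrightarrow M$ forces $\Ext_A(T,M)=0$ (from the long exact sequence and $\Ext_A(T,T)=0$), so $\dim\Hom_A(T^s,M)=s\langle t,d\rangle_Q$ holds on the \emph{entire} image of $\tilde{\pi}$, which is then exactly $\mathcal{V}_{T^s,d}$. This bypasses the tangent-space machinery entirely and yields the mildly stronger statement that every $M$ admitting a surjection from $T^s$ already lies in $\mathcal{V}_{T^s,d}$.
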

\begin{proof} By Theorem \ref{g1Thm1Version}, it remains to prove that $\Rep_{(s,d)}\voll\big(A[T]\big)$ is irreducible. We consider the dominant map
\[
\tilde{\pi}: \Rep_{(s,d)}\voll\big(A[T]\big) \to \Rep_{d}(Q),\tilde{M}=(M,k^s,f)\mapsto M.
\]
For each $\tilde{M}=(M,k^s,f)\in\Rep_{(s,d)}\voll\big(A[T]\big)$ the regular map
\[
\Rep_{d}(Q) \to \Rep_{(s,d)}\big(A[T]\big),\,N\mapsto (N,k^s,0)
\]
induces a split mono for the differential $\tilde{\pi}$ in $\tilde{M}$
\[
\mathsf{d}\tilde{\pi}_{\tilde{M}}:\mathsf{T}_{\tilde{M}}\Rep_{(s,d)}\voll\big(A[T]\big) \to \mathsf{T}_{M}\Rep_{d}(Q),
\]
i.e. $\mathsf{d}\tilde{\pi}_{\tilde{M}}$ is surjective.
For each $\tilde{M}=(M,k^s,f)\in\Rep_{(s,d)}\voll\big(A[T]\big)$ we have
\[
\Hom_A(T^s,M) ~~ \simeq ~~ \mathsf{T}_{\tilde{M}}\tilde{\pi}\inv\big(\tilde{\pi}(\tilde{M})\big).
\]
There is an open dense subset $\mathcal{U}\subseteq \Rep_{(s,d)}\voll\big(A[T]\big)$ such that for each $\tilde{M}\in\mathcal{U}$ we have
\[
\Hom_A(T^s,M)~\simeq ~\ker{\mathsf{d}\tilde{\pi}_{\tilde{M}}}
\] 
(Theorem \ref{dichteTeilmengeWotangentFaserKern}). Since $\mathsf{d}\tilde{\pi}_{\tilde{M}}$ is surjective, by using Theorem \ref{SchofieldHomFormel} we obtain:
\[
\dim\Hom_A(T^s,M)~=~\hom(T^s,d)
\]
for each $\tilde{M}\in\mathcal{U}$.
So
\[
\mathcal{U}~~\subseteq ~~\bigg\{ (M,k^s,f)\in\Rep_{(s,d)}\voll(A[T])\;:\; \hom(T^s,M)=\dim\Hom_A(T^s,M) \bigg\}=:\mathcal{V}.
\]
Since $\mathcal{U}\subseteq\Rep_{(s,d)}\voll(A[T])$ is dense,
$\mathcal{V}\subseteq\Rep_{(s,d)}\voll(A[T])$ is dense, too.

The map $\tilde{\pi}$ induces a surjective regular map
\[
\pi:\mathcal{V}\to\mathcal{V}_{T^s,d},(M,k^s,f)\mapsto M,
\]
where $\mathcal{V}_{T^s,d}\subseteq\Rep_d(Q)$ is dense (and thus irreducible). {All} fibers of $\pi$ are irreducible of equal dimension and $\mathcal{V}$ is equidimensional, thus $\mathcal{V}$ has to be irreducible by Theorem \ref{faserDimIrredKrit}.
Using $\overline{\mathcal{V}}=\Rep_{(s,d)}\voll(A[T])$, we can conclude the claim.  
\end{proof}


\section{Semistability}
For all notions concerning stability and moduli spaces of representations we refer to \cite{Re}. For an $A[T]$-module $\tilde{M}=(M,V,f:T\otimes_k V\to M)$ we define its slope
\[
\mu\big(\tilde{M}\big)~=~\mu\big(M,V,f\big):=~ \frac{\dim V }{\dim V + \dim M}.
\]
\begin{mydef}
An  $A[T]$-module $\tilde{M}=(M,V,f:T\otimes_k V\to M)$ is called \textit{semi-stable} (resp. \textit{stable}) if 
$\mu(U)\leq \mu(\tilde{M})$ (resp. $\mu(U)<\mu(\tilde{M})$) for all proper subrepresentation $0\neq U\subset\tilde{M}$.
\end{mydef}

\subsection{A first criterion}
\begin{thm}\label{kingStabKoecherErw}
Let $\tilde{M}=(M,V,f:T\otimes_k V\to M)$ be an $A[T]$-module with $\tilde{M}_\infty\neq 0$. Then $\tilde{M}$ is (semi-)stable iff for all subspaces $W\subseteq \tilde{M}_\infty$ the following inequality is fulfilled:
\[
\dim M \cdot \frac{\dim W}{\dim V} \underset{(=)}{<}
\sum_{i\in Q_0} \dim \left(\sum_{l=1}^{t_i}\tilde{M}_{\rho_{l, (i)}}(W)\right).
\] 
\end{thm}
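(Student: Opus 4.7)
The plan is to translate the slope condition defining (semi-)stability into a numerical inequality on pairs $(U_M, W)$, where $W \subseteq V$ is a subspace and $U_M \subseteq M$ is an $A$-submodule containing $f(T \otimes W)$; then to reduce to the ``minimal'' such subrepresentations; and finally to identify the resulting dimension in terms of the fixed basis $\{\rho_{l,(i)}\}$ of $T_i$.

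First I would observe that a subrepresentation of $\tilde{M}$ is precisely the datum of a subspace $W \subseteq V$ together with an $A$-submodule $U_M \subseteq M$ satisfying $f(T \otimes W) \subseteq U_M$. This is well-posed because $T \otimes W$ is an $A$-submodule of $T \otimes V$ (the $A$-action lives on the first factor) and $f$ is $A$-linear, so $f(T \otimes W)$ is automatically an $A$-submodule of $M$. Since $\dim V \neq 0$ by the standing hypothesis $\tilde{M}_\infty \neq 0$, clearing denominators in $\mu(U) \leq \mu(\tilde{M})$ yields the equivalent condition
\[
\dim M \cdot \dim W \;\leq\; \dim V \cdot \dim U_M,
\]
and analogously with strict inequality in the stable case.

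The key reduction step is that, with $W$ fixed, this inequality is hardest to satisfy when $U_M$ is as small as possible. Since $f(T \otimes W)$ is itself an $A$-submodule, the minimal admissible $U_M$ is $f(T \otimes W)$, and any enlargement of $U_M$ only makes the inequality easier. Hence it suffices to quantify over all subspaces $W \subseteq V$ with the choice $U_M = f(T \otimes W)$; conversely, failure of the inequality for some $W$ exhibits $(f(T\otimes W), W, f|_{T\otimes W})$ as an explicit destabilizing subrepresentation. The edge cases $W = 0$ and (when $\tilde{M}$ is full) $W = V$ correspond to the trivial subrepresentations $0$ and $\tilde{M}$, excluded from the definition of (semi-)stability; the quantification ``for all subspaces $W \subseteq \tilde{M}_\infty$'' in the theorem is to be read in this sense.

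Finally I would compute the right-hand side. By $A$-linearity, the image of $f|_{T\otimes W}$ decomposes vertex-wise,
\[
\dim f(T\otimes W) \;=\; \sum_{i \in Q_0} \dim f_i(T_i \otimes W),
\]
and the identification $\rho_{l,(i)} \otimes w \longleftrightarrow \tilde{M}_{\rho_{l,(i)}}(w)$ coming from the fixed basis of $T_i$ immediately gives
\[
f_i(T_i \otimes W) \;=\; \sum_{l=1}^{t_i} \tilde{M}_{\rho_{l,(i)}}(W) \;\subseteq\; M_i,
\]
which is the right-hand side of the claimed inequality after summing over $i \in Q_0$. The argument is essentially bookkeeping once the parameterization of subrepresentations is in place; the one step requiring care is the reduction to minimal $U_M$, which rests on $f$ being $A$-linear on all of $T \otimes V$ (and not merely a $k$-linear map on the relevant piece). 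Beyond this I anticipate no substantive obstacle.
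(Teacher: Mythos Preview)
Your proposal is correct and follows essentially the same approach as the paper: both reduce the slope inequality to the minimal subrepresentation generated by a given $W\subseteq V$ and identify its $i$-th component as $\sum_{l}\tilde{M}_{\rho_{l,(i)}}(W)$. Your argument is in fact more explicit than the paper's, which simply asserts the formula for the smallest subobject $\overline{W}$; your observation that $f(T\otimes W)$ is already an $A$-submodule (because $T\otimes W$ is, and $f$ is $A$-linear) is exactly what justifies that assertion.
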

\begin{proof}
For all subobjects $\tilde{U}=(U,W,g)\subsetneq \tilde{M}$, we have:
\[ \mu(\tilde{U})  \underset{(=)}{<}  \mu(\tilde{M}) \Leftrightarrow \dim M \cdot \frac{\dim W}{\dim V} \underset{(=)}{<}  \dim U.   \]
We can easily determine the total space of the smallest subobject $\overline{W}$ of $\tilde{M}$ containing a given $W\subset V$. Namely, we have
\[
\overline{W}_i~=~\sum_{l=1}^{t_i}\tilde{M}_{\rho_{l,(i)}}(W)
\]
for each $i\in Q_0$.
\end{proof}

\subsection{An observation on the {Harder-Narasimhan} filtration}$ $\\
At first we recall the notion of {Harder-Narasimhan} filtration.
\begin{mydef}$ $
\begin{enumerate}
\item[a)] 
A dimension vector $(s,d)\in \cN\times\cN Q_{0}$ is called \textit{(semi-)stable} if there is a
(semi-)stable representation of $A[T]$ with dimension vector $(s,d)$. 
\item[b)] A tuple 
\[
(s,d)^*~=~\big( (s^1,d^1),\ldots, (s^r,d^r) \big)~~\in \big(\cN\times \cN Q_{0}\big)^r
\]
of dimension vectors is of \textit{HN-type} if each $(s^l,d^l)$ ($l=1,\ldots,r$) is semi-stable and 
\[
\mu(s^1,d^1) > \ldots > \mu(s^r,d^r).
\] 
\item[c)] A filtration
\[
0 = \tilde{M}_0 \subset \tilde{M}_1 \subset \ldots \subset \tilde{M}_r =\tilde{M}
\] 
of a representation $\tilde{M}$ (of $A[T]$) is called {Harder-Narasimhan} (\textit{HN}) if each quotient $\tilde{M}_{l}/\tilde{M}_{l-1}$ ($l=1,\ldots,r$) is  semi-stable and 
\[
\mu\big(\tilde{M}_{1}/\tilde{M}_{0}\big) > \mu\big(\tilde{M}_{2}/\tilde{M}_{1}\big) > \ldots >  \mu\big(\tilde{M}_{r}/\tilde{M}_{r-1}\big).
\]
\end{enumerate}
\end{mydef}
\begin{prp}$ $\\
Every representation $\tilde{M}$ of $A[T]$ admits a unique {Harder-Narasimhan} filtration.
\end{prp}
\begin{proof}
See \cite{Re}.
\end{proof}

\begin{mydef}For a HN type $(s,d)^*$ we denote by 
\[
\Rep_{(s,d)^*}^{HN}(A[T])\subseteq \Rep_{(s,d)}(A[T])
\] 
the subset of representations whose HN filtration is of type $(s,d)^*$. $\Rep_{(s,d)*}^{HN}(A[T])$ is called \textit{HN stratrum} for the HN type $(s,d)^*$. 
More generally,  we denote by
\[
\Rep_{(s,d)}^{(s,d)^*}(A[T]) \subseteq \Rep_{(s,d)}(A[T])
\]
the subset of representations $\tilde{M}$ possessing a filtration of type $(s,d)^*$, i.e.  there is a chain of subrepresentations
$
0=\tilde{M}_0\subset \tilde{M}_1\subset \ldots \subset \tilde{M}_r=\tilde{M}
$ 
with $\dimv\,\tilde{M}_l/\tilde{M}_{l-1} = (s^l,d^l)$ for $l=1,\ldots,r$.
\end{mydef}

\begin{thm}\label{StrukturMorphismusSurjektivCharakDurchHNFilt} 
Let $\tilde{M}=(M,V,f:T\otimes_{k} V\to M)$ be a representation of $A[T]$ with $V\neq 0$. Then the following are equivalent:
\begin{enumerate}
\item[a)] $f:T\otimes_{k} V \to M$ is surjective.
\item[b)] For the HN filtration of $\tilde{M}$
\[
0=\tilde{M}_0\subset \tilde{M}_1\subset \ldots \subset \tilde{M}_r = \tilde{M}
\] 
we have
\[
\mu(\tilde{M}_r/\tilde{M}_{r-1}) ~\neq ~ 0.
\]
\end{enumerate}
\end{thm}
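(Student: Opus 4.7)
The slope $\mu(\tilde{N}) = \dim N_\infty/(\dim N_\infty + \dim N_M)$ of a nonzero $A[T]$-module $\tilde{N}$ vanishes exactly when $N_\infty = 0$, so condition (b) is equivalent to saying that the last HN quotient has nonzero $\infty$-component. My plan is to prove each implication by contraposition, using on one side that any subrepresentation of $\tilde{M}$ is automatically closed under $f$, and on the other side the standard dual property of HN filtrations that $\tilde{M}_r/\tilde{M}_{r-1}$ attains the minimum slope among all nonzero quotients of $\tilde{M}$.

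For (a) $\Rightarrow$ (b), suppose $\mu(\tilde{M}_r/\tilde{M}_{r-1}) = 0$. If $r = 1$ this reads $\mu(\tilde{M}) = 0$, forcing $V = 0$, contrary to the hypothesis. If $r \geq 2$, the vanishing of the $\infty$-component of the top quotient gives $(\tilde{M}_{r-1})_\infty = V$; since $\tilde{M}_{r-1}$ is a subrepresentation of $\tilde{M}$, its $A$-module component contains $f(T \otimes_k V)$, which equals $M$ by the assumed surjectivity. Hence $\tilde{M}_{r-1} = \tilde{M}$, contradicting the strict inclusion $\tilde{M}_{r-1} \subsetneq \tilde{M}_r = \tilde{M}$ in the HN filtration.

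For (b) $\Rightarrow$ (a), suppose $f$ is not surjective and set $K := \im(f) \subsetneq M$. Since $f$ is an $A$-module map, $K$ is an $A$-submodule, so $\tilde{K} := (K, V, f)$ is a subrepresentation of $\tilde{M}$; the quotient $\tilde{M}/\tilde{K} = (M/K, 0, 0)$ is then nonzero with slope $0$. By the minimality property of the last HN quotient, $\mu(\tilde{M}_r/\tilde{M}_{r-1}) \leq \mu(\tilde{M}/\tilde{K}) = 0$, and since slopes lie in $[0,1]$ this forces $\mu(\tilde{M}_r/\tilde{M}_{r-1}) = 0$, contradicting (b).

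The only nontrivial ingredient is the invoked dual maximality property of HN filtrations — namely that $\tilde{M}/\tilde{M}_{r-1}$ realizes the minimum slope among nonzero quotients of $\tilde{M}$ — which I expect to be explicitly stated or immediately deducible from the construction in \cite{Re}, so I do not anticipate a real obstacle. If it were not directly available, one can derive it on the spot: given a putative quotient $\tilde{Q}$ of $\tilde{M}$ with $\mu(\tilde{Q}) < \mu(\tilde{M}_r/\tilde{M}_{r-1})$, the composition with the surjection onto $\tilde{M}_r/\tilde{M}_{r-1}$, together with semistability of the latter, yields a contradiction in the usual two cases (the composition is zero or has nontrivial image).
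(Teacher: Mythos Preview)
Your proof is correct. For (a)$\Rightarrow$(b) your argument coincides with the paper's: both observe that if the top HN quotient has slope $0$ then $(\tilde{M}_{r-1})_\infty=V$, and the subrepresentation condition $f(T\otimes_k V)\subseteq (\tilde{M}_{r-1})_M$ together with surjectivity of $f$ forces $\tilde{M}_{r-1}=\tilde{M}$.

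For (b)$\Rightarrow$(a) the two arguments diverge. You pass through the general dual HN property that $\tilde{M}/\tilde{M}_{r-1}$ realizes the minimal slope among nonzero quotients, and then exhibit the quotient $\tilde{M}/\tilde{K}=(M/\im f,0,0)$ of slope $0$. The paper instead \emph{builds} the HN filtration of $\tilde{M}$ explicitly: it takes the full subrepresentation $\tilde{U}=(\im f,V,f')$, applies the already–proved direction (a)$\Rightarrow$(b) to $\tilde{U}$ to see that every slope in the HN filtration of $\tilde{U}$ is strictly positive, notes that $\tilde{M}/\tilde{U}$ is nonzero semistable of slope $0$, and concludes by uniqueness that the HN filtration of $\tilde{M}$ is the HN filtration of $\tilde{U}$ topped off by $\tilde{M}$ itself. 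Your route is shorter but relies on a fact about HN filtrations that is not stated in the paper (your sketch of its proof is a bit imprecise---the ``composition'' you allude to is not quite the right object; the clean argument goes by induction on $r$ via $\tilde{N}+\tilde{M}_{r-1}$). The paper's route is entirely self-contained and yields extra information: it identifies the HN filtration of a non-full $\tilde{M}$ in terms of that of its full part, which is exactly what is used later in the HN-stratification and motivic arguments.
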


\begin{proof}
\underline{a) $\Rightarrow$ b):}
We denote $\tilde{U}=\tilde{M}_{r-1}$, which we write as
\[
\tilde{U}=(U,W,g:T\otimes_{k} W\to U).
\]
Since $\tilde{U}\subsetneq \tilde{M}$ is a subrepresentation, we have the following commutative diagram:
\[
\begin{tikzcd}
 T\otimes_{k} V \ar[r, "f"]  & M  \\
 T\otimes_{k} W \ar[r, "g"] \arrow[u, hook] & U \arrow[u, hook]  \\
\end{tikzcd} .
\]

Assume $\mu(\tilde{M}/\tilde{U})=0$, i.e. $\dim V - \dim W =0$. So we obtain $W=V$, and since $f$ is surjective we can conclude from the above commutative diagram that $U$ already equals $M$. In other words, $\tilde{U}=\tilde{M}$,  contradicting our assumption.

\underline{b) $\Rightarrow$ a):}
Assume the structure morphism $f:T\otimes_{k} V\to M$ is not surjective.
Then we can consider the proper subobject $\tilde{U}\subsetneq \tilde{M}$ induced by the commutative diagram
\[
\begin{tikzcd}
\tilde{M}: &  T\otimes_{k} V \ar[r, "f"]  & M  \\
\tilde{U}:\arrow[u, hook] & T\otimes_{k} V \ar[r, "f'"] \arrow[u, hook] & \im{f}. \arrow[u, hook]  \\
\end{tikzcd} 
\] 

Obviously then we have $\mu(\tilde{M}/\tilde{U})=0$, and since $f$ is not surjective, we neither have $\dimv\,\tilde{M}/\tilde{U} = 0$.
So $\tilde{M}/\tilde{U}$ is a semi-stable representation.

Now look at  the HN filtration of $\tilde{U}$. Since the structure morphism of $\tilde{U}$ is surjective we can conclude from the first part of this proof that for the HN filtration of $\tilde{U}=Y_l\supset Y_{l-1}\supset \ldots \supset Y_1\supset Y_0=0$ we have
\[
\mu(Y_l/Y_{l-1})\neq 0.
\]

Since by definition the slope is always $\geq 0$ we can conclude 
\[
\mu(Y_l/Y_{l-1}) > \mu(\tilde{M}/\tilde{U}).
\]
Using the uniqueness of the HN filtration we finally obtain that 
\[
0=Y_0\subset Y_1\subset \ldots Y_l = \tilde{U} \subset \tilde{M}
\]
must be the HN filtration of $\tilde{M}$. But this contradicts our assumption about the HN filtration of $\tilde{M}$. So $f$ has to be surjective.
\end{proof}

Consequences of Theorem \ref{StrukturMorphismusSurjektivCharakDurchHNFilt} are:
\begin{crl} \label{sstSindGenVoll}
For $(s,d)\in \cN\times \cN^{Q_{0}}$ we have the following connection between the semi-stable representations and full representations: 
\[
\Rep_{(s,d)}^\sst(A[T]) ~ \subseteq ~ \Rep_{(s,d)}\voll(A[T]).
\]
\end{crl}

\subsection{Geometric consequences for the moduli space}$ $\\
The linear algebraic group 
\[
\G_{(s,d)}:=\Gl_{s}(k)\times\prod_{i\in Q_0}\Gl_{d_i}(k)
\]
acts on $\Rep_{(s,d)}( \hat{Q})$ via the base change action
\[
(g_i)_i\cdot (\tilde{M}_\alpha)_{\alpha\in\hat{Q}_1}
=
\big( g_j \tilde{M}_\alpha g_i\inv \big)_{\alpha:i\to j}.
\]
$\Rep_{(s,d)}(A[T])$ is stable under this $\G_{(s,d)}$-action. By definition, the $\G_{(s,d)}$-orbits in $\Rep_{(s,d)}(A[T])$ correspond bijectively to isomorphism classes $[\tilde{M}]$ of representations of $A[T]$ of dimension vector $(s,d)$. We consider the stability function $\tilde{\Theta}$ for $\hat{Q}$ given by $\tilde{\Theta}_{\infty}=1$ and $\tilde{\Theta}_i=0$ for $i\in Q_0$. The associated slope function on representations of $\hat{Q}$ coincides with the slope function $\mu$ on $A[T]$-modules. This allows us to define moduli spaces $\M_{(s,d)}^\sst\big(A[T]\big)$ resp. $\M_{(s,d)}^\st\big(A[T]\big)$ as the algebraic quotient of $\Rep_{(s,d)}^\sst(A[T])$, resp. the geometric quotient of $\Rep_{(s,d)}^\st(A[T])$, by $\G_{(s,d)}$.  
\begin{thm}\label{GeoPropModuli} Assume $\Ext_A(T,T)=0$ and $\gamma_{T^s,d}=d$.\\
If $\Rep_{(s,d)}^\st\big(A[T]\big)\neq\emptyset$, then both $\M_{(s,d)}^\sst\big(A[T]\big)$ and $\M_{(s,d)}^\st\big(A[T]\big)$ are irreducible and smooth of dimension
\[
\dim\,\M_{(s,d)}^\sst\big(A[T]\big)~=~ 1 - \langle (s,d),(s,d)\rangle_{A[T]}.
\]
\end{thm}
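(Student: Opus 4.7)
The plan is to derive both statements from Theorem \ref{homlogieTrifftGeometrie2} on the geometric properties of $\Rep^\voll$, together with standard GIT arguments in the sense of King, using Corollary \ref{sstSindGenVoll} as the bridge between stability and fullness.

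First I would observe that the semistable locus $\Rep^\sst_{(s,d)}(A[T])$ is open in $\Rep_{(s,d)}(A[T])$ by King's general theory, and Corollary \ref{sstSindGenVoll} gives the inclusion $\Rep^\sst_{(s,d)}(A[T])\subseteq \Rep^\voll_{(s,d)}(A[T])$. Since the latter is open in $\Rep_{(s,d)}(A[T])$ as well, $\Rep^\sst$ is open in $\Rep^\voll$, and hence so is $\Rep^\st$. By Theorem \ref{homlogieTrifftGeometrie2} the variety $\Rep^\voll$ is smooth and irreducible of dimension $\dim\Rep_d(Q)+s\langle t,d\rangle_Q$. The assumption $\Rep^\st\neq\emptyset$ ensures $\Rep^\sst\neq \emptyset$, so $\Rep^\sst$ and $\Rep^\st$ inherit smoothness and irreducibility, and $\Rep^\st$ is open dense in $\Rep^\sst$.

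Next I would pass to the quotient. The scalar subgroup $k^\ast\subseteq\G_{(s,d)}$ acts trivially, so the effective group is $\PG_{(s,d)}:=\G_{(s,d)}/k^\ast$ of dimension $s^2+\sum_i d_i^2-1$. On $\Rep^\st$, stabilizers in $\G_{(s,d)}$ are exactly $k^\ast$, so $\PG_{(s,d)}$ acts freely and $\M^\st$ is the smooth, irreducible geometric quotient. Since the GIT map $\Rep^\sst\to\M^\sst$ is surjective and $\Rep^\sst$ is irreducible, $\M^\sst$ is irreducible, and the open inclusion $\M^\st\hookrightarrow \M^\sst$ is dense. The dimension count follows from
\begin{align*}
\dim\M^\st &= \dim\Rep_d(Q)+s\langle t,d\rangle_Q -s^2-\textstyle\sum_i d_i^2+1\\
&= 1-\bigl(s^2-s\langle t,d\rangle_Q +\langle d,d\rangle_Q\bigr)=1-\langle(s,d),(s,d)\rangle_{A[T]},
\end{align*}
using $\langle d,d\rangle_Q=\sum_i d_i^2-\sum_{\alpha:i\to j}d_id_j$ and the definition of the Euler form of $A[T]$.

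The subtle point, and the main obstacle, is smoothness of $\M^\sst$ at strictly semistable points (if any). To handle this I would invoke Luna's étale slice theorem applied to the smooth reductive $\G_{(s,d)}$-variety $\Rep^\sst$: around any polystable orbit $\G_{(s,d)}\cdot\tilde M$, the local model of $\M^\sst$ is $N_{\tilde M}/\!\!/ \mathrm{Stab}(\tilde M)$ where $N_{\tilde M}$ is a slice transverse to the orbit. Since $\tilde M$ is full (by Corollary \ref{sstSindGenVoll}), Theorem \ref{ext2aufvolle} yields $\Ext^2_{A[T]}(\tilde M,\tilde M)=0$, so the deformation theory is unobstructed and the slice is identified with $\Ext^1_{A[T]}(\tilde M,\tilde M)$, a linear (smooth) space. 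Combined with the fact that $\M^\st$ is open dense in $\M^\sst$ and of the stated dimension, this yields smoothness of $\M^\sst$ and completes the proof.
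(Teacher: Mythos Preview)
Your treatment of $\M^\st$ is correct and matches the paper's sketch: semistable implies full (Corollary~\ref{sstSindGenVoll}), the full locus is smooth and irreducible (Theorem~\ref{homlogieTrifftGeometrie2}), stable representations have scalar endomorphisms, and the fibre-dimension count yields the formula. Your direct computation of $\dim\M^\st$ as the dimension of the full locus minus $\dim\G_{(s,d)}-1$ is equivalent to the paper's implicit use of Theorem~\ref{ext2aufvolle} together with $\dim\Hom=1$ to identify $1-\langle(s,d),(s,d)\rangle_{A[T]}$ with $\dim\Ext^1$.

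The gap is in your argument for smoothness of $\M^\sst$ at strictly semistable points. Luna's slice theorem does give an \'etale-local model $\Ext^1_{A[T]}(\tilde M,\tilde M)/\!\!/\,\mathrm{Stab}(\tilde M)$, but the slice being \emph{linear} does not make its GIT quotient smooth: for strictly polystable $\tilde M$ the group $\mathrm{Stab}(\tilde M)/k^\ast$ is a nontrivial reductive group, and invariant-theory quotients of vector spaces by such groups are typically singular. Density of $\M^\st$ in $\M^\sst$ does not rescue this either, since a variety can be singular along the complement of a dense smooth open. The paper's own proof is equally silent here---it only invokes the geometric quotient on the stable locus---so you are not missing an argument the paper actually supplies; but as written neither argument establishes smoothness of $\M^\sst$ outside $\M^\st$.
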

\begin{proof}
We calculate fibre dimensions for the geometric quotient 
\[
\pi:\Rep_{(s,d)}^\st\big(A[T]\big) \to \M_{(s,d)}^\st\big(A[T]\big),
\]
and use Corollary \ref{sstSindGenVoll}, Theorem \ref{ext2aufvolle} and the fact that the endomorphism rings of stable representations are trivial.
\end{proof}

Next, we introduce the Harder-Narasimhan stratification. Note that the term stratification is used in a weak sense, meaning a finite decomposition of a variety into locally closed subsets.
\subsection{Harder-Narasimhan stratification}\label{hns}$ $\\
In this section we write $\Rep_{(s,d)}$ for $\Rep_{(s,d)}\voll(A[T])$ to simplify notation.
\begin{thm}\label{verAllgKin} Let assume $\Ext_A(T,T)=0$ and $\gamma_{T^s,d}=d$.\\The HN-strata for the HN-types 
\[
(s,d)^* ~=~\big( (s^1,d^1),\ldots, (s^r,d^r) \big)
\] 
with weight $(s,d)$, i.e. $(s,d)=\sum_{i=1}^r(s^i,d^i)$, and $s^r\neq 0$ define a stratification of $\Rep_{(s,d)}$.

The codimension of $\Rep_{(s,d)^*}^{HN}$ in $\Rep_{(s,d)}$ is given by:
\[
- \sum_{k<l}\langle (s^k,d^k),(s^l,d^l)\rangle_{A[T]}.
\] 
\end{thm}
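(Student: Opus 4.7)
The plan is to adapt the classical HN-stratification argument for quiver representations from \cite{Re} to the present setting of $A[T]$. The hypotheses $\Ext_A(T,T)=0$ and $\gamma_{T^s,d}=d$ place us on the smooth, irreducible variety $\Rep_{(s,d)}\voll(A[T])$ of Theorem~\ref{homlogieTrifftGeometrie2}, and guarantee via Theorem~\ref{ext2aufvolle} the vanishing of $\Ext^2_{A[T]}$ between full representations (in particular between semi-stable ones, by Corollary~\ref{sstSindGenVoll}). These are exactly the inputs that let the dimension count collapse cleanly to the Euler form $\langle\cdot,\cdot\rangle_{A[T]}$ of Corollary~\ref{eulerform_folgerung}.

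For the partition statement, Theorem~\ref{StrukturMorphismusSurjektivCharakDurchHNFilt} identifies the condition $s^r\neq 0$ on the HN type $(s,d)^*$ with fullness of the representation. Uniqueness of HN filtrations then yields the disjoint decomposition of $\Rep_{(s,d)}\voll(A[T])$ into the strata $\Rep_{(s,d)^*}^{HN}$, and each stratum is locally closed by standard upper-semicontinuity of dimensions of subrepresentations of prescribed dimension vector.

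For the codimension formula I would use the associated-bundle description of the stratum. Fixing a flag $F^\bullet$ of $\tilde R$-graded subspaces in $k^{(s,d)}$ of type $(s,d)^*$, letting $P\subseteq\G_{(s,d)}$ be its stabiliser, and letting $\Rep^F\subseteq\Rep_{(s,d)}\voll(A[T])$ be the $P$-stable closed subvariety of representations compatible with $F^\bullet$, uniqueness of HN filtrations makes the natural map
\[
\G_{(s,d)}\times^P\Rep^{F,HN}\;\xto{\sim}\;\Rep_{(s,d)^*}^{HN}
\]
an isomorphism, where $\Rep^{F,HN}\subseteq\Rep^F$ is the open subset whose successive quotients are semi-stable. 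The projection $\Rep^{F,HN}\to\prod_l\Rep_{(s^l,d^l)}^{\sst}(A[T])$ reading off successive quotients is a vector bundle whose fibre is the strict-upper-triangular ``extension data''. Combining its rank with $\dim\G_{(s,d)}/P$, with $\dim\Rep_{(s^l,d^l)}^{\sst}(A[T])=\dim\Rep_{(s^l,d^l)}\voll(A[T])$ (Corollary~\ref{sstSindGenVoll} plus Theorem~\ref{homlogieTrifftGeometrie2}), and with $\dim\Rep_{(s,d)}\voll(A[T])$ from the same theorem, a short bookkeeping using bilinearity of $\langle\cdot,\cdot\rangle_{A[T]}$ yields the desired codimension $-\sum_{k<l}\langle (s^k,d^k),(s^l,d^l)\rangle_{A[T]}$.

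The main obstacle will be ensuring that this fibre rank really equals the naive off-diagonal count, since a priori the relations cutting out $A[T]\subseteq k\hat Q$ could further constrain the off-diagonal blocks. The remedy is to perform all dimension counts intrinsically on the smooth locus using the derivation description of the tangent space in Theorem~\ref{tangentspace} and Corollary~\ref{Derivation}: these already encode the relations via $\Ext^2_{A[T]}$-vanishing, so the flag-compatible dimensions are expressible directly in terms of $\langle\cdot,\cdot\rangle_{A[T]}$, and one never has to leave $\Rep_{(s,d)}\voll(A[T])$.
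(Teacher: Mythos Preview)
Your proposal is correct and follows essentially the same route as the paper: fix a flag $F^\bullet$ of type $(s,d)^*$, describe the stratum via the associated bundle $\G_{(s,d)}\times^{P}\mathcal{Z}_0$ where $\mathcal{Z}_0$ is the flag-compatible locus with semi-stable subquotients, compute the fibres of the projection to $\prod_l\Rep_{(s^l,d^l)}^{\sst}$ as affine spaces of derivations via Corollary~\ref{Derivation}, and reduce the codimension count to the Euler form using Theorem~\ref{homlogieTrifftGeometrie2}. The paper organizes the fibre calculation as an induction on the length of the flag (treating two-step filtrations first and then iterating), whereas you invoke bilinearity directly, but the substance is the same.
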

\begin{proof}
Let $F^*:0=F^0\subset F^1\subset \ldots \subset F^r$ be a flag of type $(s,d)^*$ in the $\hat{Q}_0$-graded vector space $k^d\times k^s =\oplus_{i\in Q_0}k^{d_i}~\times~k^{s}$, i.e. $F^{l}/F^{l-1}\simeq k^{d^l}\times k^{s^l}$ for $l=1,\ldots, r$, and denote by $F_i^l$ the $i$-component of $F^l$.

Denote by $\tilde{\mathcal{Z}}_{(s,d)^*}$ the closed subvariety of $\Rep_{(s,d)}(A[T])$ of representations $\tilde{M}$ which are compatible with $F^*$, i.e.  
$\tilde{M}_\gamma(F_i^l)\subset F_j^l$ for $l=1,\ldots, r$ and for all arrows $(\gamma:i\to j)$ in $\hat{Q}$. 
We have the regular map 
\[
\tilde{p}_{(s,d)^*}:\tilde{\mathcal{Z}}_{(s,d)^*} \to \Rep_{(s^1,d^1)}(A[T]) \times \ldots \times \Rep_{(s^r,d^r)}(A[T])
\] 
given by the projection $\tilde{p}_{(s,d)^*}$ mapping $\tilde{M}\in \tilde{\mathcal{Z}}_{(s,d)^*}$ to the sequence of subquotients with respect to $F^*$.
The map $\tilde{p}_{(s,d)^*}$ induces a regular map
\[
p_{(s,d)^*}:\mathcal{Z}_{(s,d)^*}\to \Rep_{(s^1,d^1)}\times \ldots \times\Rep_{(s^r,d^r)},
\]
where
\[
\mathcal{Z}_{(s,d)^*}:=\tilde{p}_{(s,d)^*}\inv\big(\Rep_{(s^1,d^1)}\times \ldots \times \Rep_{(s^r,d^r)}\big) \subseteq \tilde{\mathcal{Z}}_{(s,d)^*}~~~\text{open.}
\]
 
A minute reflection shows that 
\[
\mathcal{Z}_{(s,d)^*}\subset \Rep_{(s,d)},
\] 
and it is a locally closed subset. By Theorem \ref{homlogieTrifftGeometrie2}, 
\[
\Rep_{(s,d)},\Rep_{(s^1,d^1)},\ldots,\Rep_{(s^r,d^r)}~~~\text{are irreducible.}
\] 
We set
\[
e^*_2 ~:=~\big( (s^1,d^1), (s^2,d^2) \big).
\]
We obtain the regular map
\[
p_{e^*_2}: \mathcal{Z}_{e^*_2}  \to \Rep_{(s^1,d^1)}\times \Rep_{(s^2,d^2)},
\]
with fibers at $(\tilde{M}_1,\tilde{M}_2)\in\Rep_{(s^1,d^1)}\times \Rep_{(s^2,d^2)}$ given by
\[
p_{e^*_2}\inv(\tilde{M}_1,\tilde{M}_2)~=~ \Der_{\tilde{R}}\big(B,\Hom_{\tilde{R}}(\tilde{M}_2,\tilde{M}_1)\big).
\]
Thus all fibers are irreducible and of equal dimension (Corollary \ref{sstSindGenVoll} and \ref{Derivation}). By  Theorem \ref{faserDimIrredKrit} we have
\[
\dim\mathcal{Z}_{e^*_2} ~=~\dim p_{e^*_2}\inv(\tilde{M}_1,\tilde{M}_2) + \dim \left( \Rep_{(s^1,d^1)}\times \Rep_{(s^2,d^2)} \right).
\]

Now we set
\begin{eqnarray*}
\overline{e}^*_3  &:= &\big( (s^1,d^1) + (s^2,d^2), (s^3,d^3) \big),\\
e^*_3  &:= &\big( (s^1,d^1), (s^2,d^2), (s^3,d^3) \big).
\end{eqnarray*}
Therefore we obtain the regular map
\[
p_{\overline{e}^*_3}: \mathcal{Z}_{\overline{e}^*_3}  \to \Rep_{(s^1,d^1) + (s^2,d^2)}\times \Rep_{(s^3,d^3)}.
\]

Since
\[
Z_{e^*_3}=p_{\overline{e}^*_3}\inv( \mathcal{Z}_{e^*_2} \times \Rep_{(s^3,d^3)} ),
\]
the map $p_{\overline{e}^*_3}$ induces a regular map 
\[
\mathcal{Z}_{e^*_3} \to \mathcal{Z}_{e^*_2} \times \Rep_{(s^3,d^3)}.
\]
As above, we see that this regular map has irreducible fibers of equal dimension, that is, for  $(\tilde{M}_1,\tilde{M}_3)\in\mathcal{Z}_{e^*_2} \times \Rep_{(s^3,d^3)}$, we have:
\[
\dim \mathcal{Z}_{e^*_3} ~=~ \dim\Der_{\tilde{R}}\big(B,\Hom_{\tilde{R}}(\tilde{M}_3,\tilde{M}_1)\big) + \dim\mathcal{Z}_{e^*_2} + \dim\Rep_{(s^3,d^3)}.
\]

Inductively, we obtain in this way a regular map
\[
\mathcal{Z}_{e^*_r} \to \mathcal{Z}_{e^*_{r-1}} \times \Rep_{(s^r,d^r)}
\]
with irreducible fibers of equal dimension, where
\begin{eqnarray*}
e^*_{r-1}  & = & \big( (s^1,d^1), \ldots, (s^{r-1},d^{r-1}) \big), \\
e^*_{r}  & = & (s,d)^*.
\end{eqnarray*}
Summing up, for $(\tilde{M}_1,\tilde{M}_r)\in\mathcal{Z}_{e^*_{r-1}} \times \Rep_{(s^r,d^r)}$ this yields:
\[
\dim\mathcal{Z}_{(s,d)^*} ~=~ 
\dim\Der_{\tilde{R}}\big(B,\Hom_{\tilde{R}}(\tilde{M}_r,\tilde{M}_1)\big) + \dim\mathcal{Z}_{e^*_{r-1}} + \dim\Rep_{(s^r,d^r)}.
\]
 
Together with the formula in Corollary \ref{Derivation}, we find:
\[
\dim \mathcal{Z}_{(s,d)^*} ~=~ \sum_{n<l} \sum_{\substack{\alpha\in Q_1\\ \alpha: i\to j}} d_i^ld_j^n + s^l \langle t, d^n\rangle_Q ~~+~~\sum_{i=1}^r\dim \Rep_{(s^i,d^i)}.
\]

To simplify the notation in the following, we write $\mathcal{Z}$ (resp. $p$) for $\mathcal{Z}_{(s,d)^*}$ (resp. $p_{(s,d)^*}$).
The preimage of 
\[
\Rep_{(s^1,d^1)}^\sst\times \ldots \times \Rep_{(s^r,d^r)}^\sst
\] 
under $p$ gives us an open subvariety $\mathcal{Z}_0$ of $\mathcal{Z}$ and $\tilde{\mathcal{Z}}$. Since the varieties
\[
\Rep_{(s^1,d^1)},\ldots, \Rep_{(s^r,d^r)}
\] 
are irreducible, we see in a similar manner to $\mathcal{Z}$ that $\dim \mathcal{Z}_0=\dim \mathcal{Z}$ holds. 

The action of $\mathcal{G}_{(s,d)}$ on $\Rep_{(s,d)}(A[T])$ induces actions of the parabolic subgroup $\mathcal{P}_{(s,d)^*}$ of $\mathcal{G}_{(s,d)}$, consisting of elements fixing the flag $F^*$, on $\mathcal{Z}_0$ and $\mathcal{Z}$. The image of the associated fiber bundle $\mathcal{G}_{(s,d)}\times^{\mathcal{P}_{(s,d)^*}} \tilde{\mathcal{Z}}$ under the action morphism $m$ equals $ \Rep^{(s,d)^*}_{(s,d)}(A[T])$, which is thus a closed subvariety of $\Rep_{(s,d)}(A[T])$. The image of $\mathcal{G}_{(s,d)}\times^{\mathcal{P}_{(s,d)^*}} \mathcal{Z}_0$ under $m$ equals $\Rep_{(s,d)^*}^{HN}$, and  $\mathcal{G}_{(s,d)}\times^{\mathcal{P}_{(s,d)^*}} \mathcal{Z}_0$ is the full preimage. By the uniqueness of the HN filtration, the morphism $m$ is bijective over $\Rep_{(s,d)^*}^{HN}$,  which therefore is a locally closed subvariety of $\Rep_{(s,d)}(A[T])$. 

The canonical map $\mathcal{G}_{(s,d)}\times^{\mathcal{P}_{(s,d)^*}} \mathcal{Z}_0 \to \mathcal{G}_{(s,d)}\big/ \mathcal{P}_{(s,d)^*}$ is ({Zariski}) locally trivial. Therefore
\[
\dim \mathcal{G}_{(s,d)}\times^{\mathcal{P}_{(s,d)^*}} \mathcal{Z}_0~~~=~~
(\dim \mathcal{G}_{(s,d)} ~-~ \mathcal{P}_{(s,d)^*})~~+~~\dim \mathcal{Z}_0  .
\]
The codimension of $\Rep_{(s,d)^*}^{HN}$ in $\Rep_{(s,d)}$ is now easily computed as 
\[
- \sum_{n<l}\langle (s^n,d^n),(s^l,d^l)\rangle_{A[T]},
\] 
using the identity $\langle d,d \rangle_Q  =  \dim \mathcal{G}_d - \dim \Rep_d(Q)$ and the above description of $\mathcal{Z}_0$.
\end{proof}

From this description, we can derive a recursive criterion for the existence of semi-stable representations:
\begin{thm}\label{rekursiveFormula} Let us assume $\Ext_A(T,T)=0$. $ $\\
A dimension type $(s,d)\in \cN\times \cN Q_{0}$ is semi-stable if and only if $\gamma_{T^s,d}=d$ and there exists no HN type 
\[
(s,d)^* ~=~\big( (s^1,d^1),\ldots, (s^r,d^r) \big)
\] 
with weight $(s,d)$ and $s^r\neq 0$ such that  
\[
~~~~~~~~~~~~~~~~~~~~~~~~ \langle (s^n,d^n),(s^l,d^l)\rangle_{A[T]} ~=~0 \;\;\;\;\;\text{(for all $1\leq n<l\leq r$)}.
\]
\end{thm}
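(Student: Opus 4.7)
The plan is to reduce the semistability question for $(s,d)$ to a dichotomy between codimension-zero strata in the Harder--Narasimhan stratification of $\Rep_{(s,d)}\voll(A[T])$ established in Theorem \ref{verAllgKin}, leveraging irreducibility from Theorem \ref{homlogieTrifftGeometrie2}. First, I would observe that $\gamma_{T^s,d}=d$ is necessary: any semistable representation is full by Corollary \ref{sstSindGenVoll}, giving a surjection $T^s\twoheadrightarrow M$ of rank $d$, which by upper semicontinuity of maximal rank forces $\gamma_{T^s,d}=d$. With this in place and the standing hypothesis $\Ext_A(T,T)=0$, the variety $\Rep_{(s,d)}\voll(A[T])$ is irreducible, non-empty, and HN-stratified by Theorems \ref{homlogieTrifftGeometrie2} and \ref{verAllgKin}.

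The key technical input is that for every HN-type $(s,d)^*=((s^1,d^1),\ldots,(s^r,d^r))$ of weight $(s,d)$, the pairwise Euler forms satisfy $\langle (s^n,d^n),(s^l,d^l)\rangle_{A[T]}\leq 0$ for all $n<l$. Indeed, semistable representatives $\tilde M_n,\tilde M_l$ of these dimension vectors exist by the HN-type condition and are full by Corollary \ref{sstSindGenVoll}; the strict slope inequality $\mu(\tilde M_n)>\mu(\tilde M_l)$ together with semistability forces $\Hom_{A[T]}(\tilde M_n,\tilde M_l)=0$, and Theorem \ref{ext2aufvolle} gives $\Ext^2_{A[T]}(\tilde M_n,\tilde M_l)=0$. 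So each pairwise term equals $-\dim\Ext^1_{A[T]}(\tilde M_n,\tilde M_l)\leq 0$, and the codimension $-\sum_{n<l}\langle\cdot,\cdot\rangle_{A[T]}$ of the stratum $\Rep^{HN}_{(s,d)^*}$ inside $\Rep_{(s,d)}\voll(A[T])$ vanishes if and only if \emph{all} these pairwise Euler forms vanish.

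Finally, since $\Rep_{(s,d)}\voll(A[T])$ is irreducible, exactly one HN-stratum is dense, and the trivial HN-type $((s,d))$ corresponds to the always-open semistable locus; it is non-empty precisely when it is that unique dense stratum. Thus $(s,d)$ is semistable if and only if no non-trivial HN-type of weight $(s,d)$ with $s^r\neq 0$ has codimension zero, equivalently no such type has all pairwise Euler forms zero. The main obstacle is establishing the $\leq 0$ inequality above, which hinges on rigidity of $T$ (for the $\Ext^2$-vanishing through Theorem \ref{ext2aufvolle}) combined with the fullness of semistable modules; once this is in hand, the rest is standard irreducibility bookkeeping via the codimension formula of Theorem \ref{verAllgKin}.
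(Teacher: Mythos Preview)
Your proof is correct and follows essentially the same route as the paper: both use the HN-stratification of $\Rep_{(s,d)}\voll(A[T])$ from Theorem~\ref{verAllgKin}, show each pairwise Euler form $\langle (s^n,d^n),(s^l,d^l)\rangle_{A[T]}\leq 0$ via $\Hom$-vanishing from the slope inequality and $\Ext^2$-vanishing from Theorem~\ref{ext2aufvolle} on full representations, and conclude by irreducibility that the semistable locus is nonempty precisely when no non-trivial stratum has codimension zero. Your treatment is in fact slightly more complete than the paper's, since you explicitly verify the necessity of $\gamma_{T^s,d}=d$ and handle both implications cleanly via the ``unique dense stratum'' observation.
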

\begin{proof}
Let 
\[
H:=\left\{ (s,d)^* \,:\, \exists\, r\in\cN_{>1},\,\text{$(s,d)^* = \big( (s^1,d^1),\ldots, (s^r,d^r) \big)$ is of HN-type} \right.
\]
\[
\left. \text{with weight $(s,d)$ and $s^r\neq0$}  \right\}.
\]
Obviously $H$ is a finite set. Using Theorem \ref{StrukturMorphismusSurjektivCharakDurchHNFilt} we get 
\[
\Rep_{(s,d)}~~=~~ \Rep_{(s,d)}^\sst ~\sqcup \bigsqcup_{(s,d)^*\in H} \Rep^{HN}_{(s,d)^*}. 
\]

If $\Rep_{(s,d)}^\sst\neq \emptyset$, $\Rep_{(s,d)}^\sst$ is of equal dimension like $\Rep_{(s,d)}$.  So 
\[
\mathrm{codim}_{\Rep_{(s,d)}}\Rep^{HN}_{(s,d)^*} > 0
\]
for $(s,d)^*\in H$ implies $\Rep_{(s,d)}^\sst\neq\emptyset$.

Let $n<l$. Since $(s^n,d^n)$ resp. $(s^l,d^l)$ are semi-stable, we find semi-stable representations $\tilde{N}$ resp. $\tilde{M}$  
of $A[T]$ with $\dimv\,\tilde{N}=(s^n,d^n)$ resp. $\dimv\,\tilde{M}=(s^l,d^l)$ and by using Corollary \ref{ext2aufvolle} and \ref{sstSindGenVoll} we get the relation  
\[
\langle (s^n,d^n),(s^l,d^l)\rangle_{A[T]}~~=~~ \dim\Hom_{A[T]}(\tilde{N},\tilde{M}) ~-~ \dim \Ext_{A[T]}(\tilde{N},\tilde{M}).
\]

From $\mu(s^n,d^n) > \mu(s^l,d^l)$ we deduce $\Hom_{A[T]}(\tilde{N},\tilde{M})=0$, in particular
\[
\langle (s^n,d^n),(s^l,d^l)\rangle_{A[T]}~~=~~ ~-~ \dim \Ext_{A[T]}(\tilde{N},\tilde{M}).
\]
So the equation
\[
\sum_{n<l}\langle (s^n,d^n),(s^l,d^l)\rangle_{A[T]} ~~=~~0
\]
is fulfilled if and only if 
\[
\langle (s^n,d^n),(s^l,d^l)\rangle_{A[T]} ~=~0
\]
for all $1\leq n<l\leq r$.
\end{proof}

\begin{bsp}\label{BeispielWeiter}We carry on with Example \ref{BeispielStart} here. Obviously the $\mathcal{G}_{(3,1)}$-orbit of $T = (k^3 \xto{ [1~0~0] }k)$ is dense in $\Rep_{(3,1)}(Q)$. Therefore we have $\Ext_{kQ}(T,T)=0$.
\begin{enumerate}
    \item[1)]
    Applying the recursive criterion we derive that $(2,4,1)$ is semi-stable. In fact, by using the first criterion in Theorem \ref{kingStabKoecherErw}, we see that in this case the semi-stability notion equals to the stability notion. From the geometry of the moduli (Theorem \ref{GeoPropModuli}) we can conclude $\dim \M_{(2,4,1)}^\st(A[T]) = 4$.
    \item[2)] Analogous statements as in 1) hold for the dimension vector $(3,6,2)$. And we can deduce $\dim\,\M_{(3,6,2)}^\st\big(A[T]\big) = 6$.
\end{enumerate}
\end{bsp}

\section{Generating semi-invariants}
In this section we assume that $Q$ is an acyclic quiver. We determine a set of functions generating the ring of semi-invariants for given dimension vector $(s,d)\in\cN \hat{Q}_0$ on $\Rep_{(s,d)}(A[T])$ under the $\G_{(s,d)}$ base change action.

We start with a general observation:
\begin{lmm}\label{inducedInv}
Let $\G$ be a linear reductive group and $\mathcal{X}$ an affine $\G$-variety. Let $\mathcal{A}\subseteq \mathcal{X}$ be a closed and $\G$-stable subset. Then, for every semi-invariant function $f:\mathcal{A}\to k$ there is a semi-invariant $\tilde{f}:\mathcal{X}\to k$ such that $\tilde{f}\vert_{\mathcal{A}} = f$ holds. 
\end{lmm}
\begin{proof} Let $\chi$ be a character of $\G$.
We consider the action of $\G$ on $\mathcal{X}\times k$ given by 
\[
g.(x,\lambda)~:=~(gx, \chi(g)\lambda),
\]
where $(x,\lambda)\in \mathcal{X}\times k,\,g\in \G$.

Since $\mathcal{A}\times k ~\subseteq~ \mathcal{X}\times k$ is closed and $\G$-stable, the categorical quotient $(\mathcal{A}\times k)// \G ~\subseteq~ (\mathcal{X}\times k)//\G$ is closed, i.e.
\[
k[\mathcal{X}\times k]^{\G} \to k[\mathcal{A}\times k]^{\G} ,\, f \mapsto f\vert_{\mathcal{A}\times k}
\]  
is surjective.
\end{proof}

In the following, to simplify the notation we denote by $\hat{A}$ the path algebra of the one-point extended quiver $\hat{Q}$.
 
Let $\tilde{N}$ be a representation of $\hat{Q}$ with projective resolution
\[
0\to \bop_{v\in \hat{Q}_0} \hat{A}e_v^{b(v)} \xto\theta \bop_{v\in \hat{Q}_0}\hat{A}e_v^{a(v)} \to \tilde{N}\to 0.
\]
For $\tilde{M}\in\Rep_{(s,d)}(\hat{Q})$ we apply the functor $\Hom_{\hat{A}}(?,\tilde{M})$ to this resolution and get
\[
0\to \Hom_{\hat{A}}(\tilde{N},\tilde{M})\to \Hom_{\hat{A}}(\bop_{v\in \hat{Q}_0}\hat{A}e_v^{a(v)},\tilde{M}) \xto{ \Hom_{\hat{A}}(\theta,\tilde{M})}\]
\[
\Hom_{\hat{A}}(\bop_{v\in \hat{Q}_0}\hat{A}e_v^{b(v)},\tilde{M})
\to\Ext_{\hat{A}}(\tilde{N},\tilde{M})\to 0.
\]
The condition $\langle \dimv\,\tilde{N}, (s,d)\rangle_{\hat{Q}}=0$ is equivalent to
\[
\sum_{v\in \hat{Q}_0}a(v)\hat{d}_v~=~\sum_{v\in \hat{Q}_0}b(v)\hat{d}_v,
\]
that is, in this case we end up with a linear map between vector spaces of equal dimension. {Schofield} and {Van den Bergh} proved (\cite{S2}) that all semi-invariant\\functions arise as linear combination of functions
\[
\omega_{\tilde{N}}:\Rep_{(s,d)}(\hat{Q})\to k,\, \tilde{M}\mapsto\det(\Hom_{{\hat{A}}}(\theta,\tilde{M}))
\]
induced by representations $\tilde{N}$ of $\hat{Q}$ with $\langle \dimv\,\tilde{N}, (s,d)\rangle_{\hat{Q}}=0$.\\
Using the relation $\hat{A}\epi A[T]$ we can conclude with Lemma \ref{inducedInv}:
\begin{lmm}$ $\\The functions $\omega_{\tilde{N}}$ for representations $\tilde{N}$ of $A[T]$ such that $\langle \dimv\,\tilde{N},(s,d)\rangle_{\hat{Q}}=0$
generate the ring of invariants on $\Rep_{(s,d)}(A[T])$.
\end{lmm}

We can improve this description further by using the canonical exact sequence for one-point extensions and the explicit description of the standard projective resolution in Theorem \ref{standardRes}: Let $\tilde{N}$ be a representation of $A[T]$. Then we have the canonical exact sequence 
\[
\begin{tikzcd}
{0} \arrow[r]  & \tilde{N}_{\mathrm{full}} \arrow[r] & \tilde{N}  \arrow[r] &  \tilde{N}_{0}  \arrow[r] &  {0}   \end{tikzcd}\]
which in more detail reads
\[\begin{tikzcd}
0\arrow[r]  & T\otimes_k W \arrow[r] \arrow[d] & T\otimes_k W  \arrow[r]\arrow[d]&  0 \arrow[d] \arrow[r] & 0   \\
0 \arrow[r]  & \im{f} \arrow[r] & N  \arrow[r]&  \coker{f}   \arrow[r]&  0.   
\end{tikzcd}  
\]
Obviously $\tilde{N}_{0}$ can be interpreted as a representation of $Q$, with standard resolution of length $1$. Thus, we arrive at the following commutative diagram with exact columns and rows:
\[
\begin{tikzcd}
& \bop_{v\in \hat{Q}_0} Be_v^{b(v)}\ar[d,"\varphi_1"]\ar[r,"\theta"] &  \bop_{v\in \hat{Q}_0}Be_v^{a(v)}  \ar[r] \ar[d,"\varphi_0"] & \tilde{N} \ar[d] \ar[r] & 0\\
0\ar[r]& \bop_{v\in \hat{Q}_0} Be_v^{c(v)}\ar[r,"\rho"] \ar[d] & \bop_{v\in \hat{Q}_0}Be_v^{e(v)} \ar[r]\ar[d] & \tilde{N}_{0} \ar[r] \ar[d] & 0. \\
& 0 & 0 & 0
\end{tikzcd}
\]
Thus, if $\langle \dimv\,\tilde{N},(s,d)\rangle_{\hat{Q}}=0$ and  $\omega_{\tilde{N}} \neq 0$ hold, we can conclude from the diagram that the determinants $\omega_{\tilde{N}_{0}}$ and $\omega_{\tilde{N}_{\mathrm{full}}}$ can be formed. This discussion shows:
\begin{thm} The ring of semi-invariant functions on $\Rep_{(s,d)}(A[T])$ is generated by the functions $\omega = \omega_{L} \cdot \omega_{\tilde{N}}$ induced by representations $L$ of $Q$ such that $\langle \dimv \,L,d\rangle_{Q}=0$ and full representations $\tilde{N}$ of $A[T]$ such that $\langle \dimv\,\tilde{N},(s,d)\rangle_{\hat{Q}}=0$.
\end{thm}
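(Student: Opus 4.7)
The plan is to make rigorous the argument sketched in the paragraph preceding the statement, using the block-triangular determinant computation suggested by the displayed commutative diagram. By the preceding lemma, every semi-invariant on $\Rep_{(s,d)}(A[T])$ is a polynomial expression in the Schofield--Van den Bergh functions $\omega_{\tilde{N}}$, where $\tilde{N}$ ranges over $A[T]$-modules with $\langle \dimv\tilde{N},(s,d)\rangle_{\hat{Q}}=0$, so it suffices to express each such $\omega_{\tilde{N}}$ as a product of the asserted form.

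For such $\tilde{N}=(N,W,f)$, I would invoke the canonical short exact sequence
\[
0\to \tilde{N}_{\mathrm{full}}\to \tilde{N}\to \tilde{N}_0\to 0,\qquad \tilde{N}_0=(\coker f,0,0),
\]
and identify $\tilde{N}_0$ with the pure $Q$-representation $L:=\coker f$. Choosing projective resolutions of $\tilde{N}_{\mathrm{full}}$ and $\tilde{N}_0$ as $\hat{A}$-modules and invoking the horseshoe lemma produces a projective resolution of $\tilde{N}$ whose differential has block-triangular shape
\[
\theta=\begin{pmatrix}\theta^{\mathrm{full}}&\sigma\\0&\theta^0\end{pmatrix}
\]
with respect to the induced direct-sum decomposition; this is exactly the content of the commutative diagram with exact rows and columns displayed just before the statement. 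Because $\tilde{N}_0$ is concentrated at vertices of $Q_0$, its minimal $\hat{A}$-projective resolution coincides with the minimal $A$-projective resolution of $L$ extended by zero at $\infty$, so that $\omega_{\tilde{N}_0}$ restricted to $\Rep_{(s,d)}(A[T])$ agrees with $\omega_L$.

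Applying the contravariant functor $\Hom_{\hat{A}}(?,\tilde{M})$ preserves the block-triangular shape, with diagonal blocks $\Hom_{\hat{A}}(\theta^{\mathrm{full}},\tilde{M})$ and $\Hom_{\hat{A}}(\theta^0,\tilde{M})$. Provided that both individual Euler-form identities $\langle \dimv L,d\rangle_Q=0$ and $\langle \dimv\tilde{N}_{\mathrm{full}},(s,d)\rangle_{\hat{Q}}=0$ hold, so that each diagonal block is a square matrix, the block-triangular determinant formula yields, up to a nonzero scalar,
\[
\omega_{\tilde{N}}=\omega_{\tilde{N}_{\mathrm{full}}}\cdot\omega_L,
\]
which exhibits $\omega_{\tilde{N}}$ in the required form.

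The main obstacle is to justify these individual Euler-form identities when only their sum is assumed to vanish. As indicated in the paragraph preceding the statement, one shows that whenever $\omega_{\tilde{N}}$ is not identically zero both identities must already hold: if, say, $\langle \dimv L,d\rangle_Q\ne 0$, then one of the diagonal blocks of $\Hom_{\hat{A}}(\theta,\tilde{M})$ becomes wider than it is tall, producing a nontrivial kernel which, via the block-triangular structure, lifts to a nontrivial kernel of the whole map and forces $\omega_{\tilde{N}}\equiv 0$. Consequently, the nonvanishing $\omega_{\tilde{N}}$ are precisely those admitting the desired factorisation, and combining this with the opening lemma yields the claimed generating set of products $\omega=\omega_L\cdot\omega_{\tilde{N}}$.
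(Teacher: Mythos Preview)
Your overall strategy coincides with the paper's: use the canonical short exact sequence $0\to\tilde{N}_{\mathrm{full}}\to\tilde{N}\to\tilde{N}_0\to 0$, apply the horseshoe lemma to obtain a block-triangular presentation map, and conclude that whenever $\omega_{\tilde{N}}\not\equiv 0$ both diagonal blocks must already be square, so that $\omega_{\tilde{N}}=\omega_L\cdot\omega_{\tilde{N}_{\mathrm{full}}}$. The paper's argument is exactly the discussion preceding the statement and is just as terse as yours on this last point.

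There is, however, a genuine gap in your kernel-lifting justification. After applying $\Hom_{\hat{A}}(?,\tilde{M})$ the matrix becomes block \emph{lower}-triangular, say $\left(\begin{smallmatrix}A&0\\ C&B\end{smallmatrix}\right)$ with $A=\Hom(\theta^{\mathrm{full}},\tilde{M})$ in the top-left and $B=\Hom(\theta^0,\tilde{M})$ in the bottom-right. If $\langle\dimv L,d\rangle_Q>0$ then $B$ is the wide block, and indeed any $(0,y)$ with $By=0$ lies in the kernel of the whole matrix, forcing $\omega_{\tilde{N}}\equiv 0$. But if $\langle\dimv L,d\rangle_Q<0$ the wide block is $A$, and an element $(x,0)$ with $Ax=0$ is sent to $(0,Cx)$, which has no reason to vanish; the dual cokernel argument likewise fails, since the tall block $B$ now sits in the bottom-right corner, where its cokernel is not inherited by the total map. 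A square block-triangular matrix with non-square diagonal blocks can certainly be invertible (partition the $3\times 3$ identity into a $1\times 2$ top-left block and a $2\times 1$ bottom-right block). So the argument as written only disposes of the case $\langle\dimv L,d\rangle_Q>0$; the opposite sign still requires a separate justification.
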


From the homological properties we can further conclude:
\begin{thm} For a character $\chi$ of $\G_{(s,d)}$, a representation $\tilde{M}=(M,V,f:T\otimes_kV\to M)\in\Rep_{(s,d)}(A[T])$ is $\chi$-semi-stable iff there is a non-trivial finite-dimensional representation $\tilde{N}=(N,W,g:T\otimes_kW\to N)$ of $A[T]$ such that 
\begin{itemize}
\item[a)] $\Hom_{A[T]}(\tilde{N},\tilde{M}) = 0$, and
\item[b)] $\langle \dimv \tilde{N},(s,d)\rangle_{A[T]} = \dim W \cdot \sum_{\substack{\alpha\in Q_1\\ \alpha:i\to j}} \dim T_i \cdot d_j$.
\end{itemize}
\end{thm}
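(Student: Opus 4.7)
The plan is to combine King's semi-stability criterion (a representation $\tilde M$ is $\chi$-semi-stable iff some semi-invariant of weight $\chi^k$, $k\geq 1$, is non-vanishing at $\tilde M$) with the preceding description of generators of the ring of semi-invariants. The goal is to reinterpret conditions a) and b) as the non-vanishing of an explicit determinantal semi-invariant attached to $\tilde N$.

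Concretely, I would apply $\Hom_{A[T]}(-,\tilde M)$ to the standard projective resolution of $\tilde N$ from Theorem \ref{standardRes}. Directly from the explicit descriptions of $P_i(\tilde N)$ one computes
\[
\dim\Hom_{A[T]}(P_0(\tilde N),\tilde M)=\dim W\cdot s+\sum_{i\in Q_0}\dim N_i\cdot d_i,
\]
\[
\dim\Hom_{A[T]}(P_1(\tilde N),\tilde M)=\dim W\sum_{i}t_id_i+\sum_{\alpha\colon i\to j}\dim N_i\cdot d_j,
\]
\[
\dim\Hom_{A[T]}(P_2(\tilde N),\tilde M)=\dim W\sum_{\alpha\colon i\to j}t_id_j,
\]
and the alternating sum equals $\langle\dimv\tilde N,(s,d)\rangle_{A[T]}$. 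Hence condition b) is precisely the equality $\dim\Hom(P_0,\tilde M)=\dim\Hom(P_1,\tilde M)$. Under b), the differential $\theta^*\colon\Hom(P_0,\tilde M)\to\Hom(P_1,\tilde M)$ is a map between vector spaces of equal dimension, so $\omega_{\tilde N}(\tilde M):=\det\theta^*$ is a well-defined semi-invariant on $\Rep_{(s,d)}(A[T])$ of some weight determined by $\dimv\tilde N$. Since $\ker\theta^*\cong\Hom_{A[T]}(\tilde N,\tilde M)$, under b) condition a) is equivalent to $\omega_{\tilde N}(\tilde M)\neq 0$.

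For the $(\Leftarrow)$ direction, the existence of $\tilde N$ with a) and b) thus produces a non-vanishing semi-invariant at $\tilde M$, and King's theorem yields $\chi$-semi-stability for the associated character. For $(\Rightarrow)$, King provides a semi-invariant $\omega$ of weight $\chi^k$ ($k\geq 1$) with $\omega(\tilde M)\neq 0$; the preceding generators theorem factors $\omega$ as $\omega_L\cdot\omega_{\tilde N'}$ with $L$ a representation of $Q$ satisfying $\langle\dimv L,d\rangle_Q=0$ and $\tilde N'$ a full representation of $A[T]$ satisfying $\langle\dimv\tilde N',(s,d)\rangle_{\hat Q}=0$. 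Viewing $L$ as $\tilde L=(L,0,0)$ and setting $\tilde N:=\tilde L\oplus\tilde N'$, additivity of the standard resolution and of determinants gives $\omega_{\tilde N}=\omega_L\cdot\omega_{\tilde N'}$, and one checks that $\tilde N$ automatically satisfies b) (the two Euler-form conditions of the generators theorem add to the single $A[T]$-Euler-form condition b) after translating via the definition of $\langle\cdot,\cdot\rangle_{A[T]}$). Then $\omega_{\tilde N}(\tilde M)\neq 0$ gives condition a).

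The main obstacle will be the bookkeeping for characters and Euler forms: verifying that the weight of $\omega_{\tilde N}$ under $\G_{(s,d)}$ is indeed a positive multiple of $\chi$, and reconciling the $\hat Q$-Euler-form normalization that appeared in the preceding generators theorem with the $A[T]$-Euler-form equation b) when the full piece $\tilde N'$ and the $Q$-piece $L$ are combined into a single representation.
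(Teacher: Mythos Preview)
Your approach is correct and coincides with the paper's: the paper's proof is extremely terse, merely writing out the cochain complex $\Hom_{A[T]}(P_\bullet(\tilde N),\tilde M)$ from the standard resolution, recording the Euler-characteristic identity, and then asserting that conditions a) and b) are jointly equivalent to the first differential $\hat\phi$ (your $\theta^*$) being an isomorphism---exactly your steps. The connection to $\chi$-semi-stability via King's criterion and the preceding generators theorem, which you spell out in both directions, is left entirely implicit in the paper; your worry about the character bookkeeping is legitimate and is not addressed there either.
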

\begin{proof} Take the standard resolution as described as in Theorem \ref{standardRes}
\[
P_\bullet(\tilde{N})\to \tilde{N } \to 0
\]
and consider the cochain complex $\mathsf{C}:=\Hom_{A[T]}\big(P_\bullet(\tilde{N}),\tilde{M}\big)$:
\[
\mathsf{C}:0 \to \Hom_{\tilde{R}}(\tilde{N},\tilde{M}) \xto{\hat{\phi}} \Hom_A\big(\Omega_A\otimes_A N,M\big)\oplus \Hom_R\big(T\otimes_k W,M\big)
\]
\[
~~~~~~~~~~~~~~~~~~~~~~~~~~~~~~~~~~~~~~~~~~~~~~~~~~~~~~~~~~~~~~~~~~~~~~~~~~~~~~\to\Hom_A\big(\Omega_A\otimes_A (T\otimes_k W),M\big)\to 0. 
\]
Then you have $H^i(\mathsf{C})=\Ext_{A[T]}^i(\tilde{N},\tilde{M})~~~(i=0,1,2)$.

In this way, we achieve the relation:
\begin{center}
$\dim \Hom_{A[T]}(\tilde{N},\tilde{M})-\dim\Ext_{A[T]}(\tilde{N},\tilde{M}) + \dim
\Ext^2_{A[T]}(\tilde{N},\tilde{M})$\\
$=$ \\
$\dim\Hom_{\tilde{R}}(\tilde{N},\tilde{M}) - \dim\Hom_A\big(\Omega_A\otimes_A N,M\big)-\dim\Hom_R\big(T\otimes_k W,M\big)$\\
$~~~~~~~+ \dim\Hom_A\big(\Omega_A\otimes_A (T\otimes_k W),M\big)$.
\end{center}

Both conditions in the theorem are equivalent to $\hat{\phi}$ being an isomorphism.
\end{proof}

\begin{bsp}\label{BeispielWeiterWeiter}We carry on with Example \ref{BeispielWeiter} here. Long calculations yields to following generating and algebraic independent semi-invariant functions:
\begin{enumerate}
    \item[1)] The regular maps $\Rep_{(2,4,1)}\voll(A[T]) \to k$ defined by
$$
\hbar_0: (A,B,C,M)  \mapsto   -\det \left(
\begin{array}{c|c|c}
A & B & C\\
\hline 
0 & \begin{pmatrix}MA \\ 0 \end{pmatrix} &
 \begin{pmatrix}0 \\ MA \end{pmatrix}
\end{array}
\right),
$$
$\hbar_1: (A,B,C,M) \mapsto \det(A\,\vert \, B)$, $\hbar_2: (A,B,C,M) \mapsto \det(A\,\vert \, C)$, \\ 
$\hbar_3: (A,B,C,M) \mapsto \det(A+C\,\vert\,B)$, $\hbar_4: (A,B,C,M) \mapsto \det(A\,\vert\,B+C)$, and $\hbar_{5}: (A,B,C,M) \mapsto \det(A+C\,\vert \, B+C)$
give rise to the geometric quotient $\hat{\pi}:\Rep_{(2,4,1)}^\st(A[T]) \to \cP^4$ by $\G_d$ in the following way 
\[
\hat{\pi}:=\big(\hbar_0\hbar_1,\hbar_0\hbar_2,\hbar_0\hbar_3,\hbar_0\hbar_4,\hbar_0\hbar_5\big).
\]
Thus, we have $\M_{(2,4,1)}^\st\big(A[T]\big) ~=~\cP^4$.

    \item[2)]In the case of the dimension vector $(3,6,2)$ the regular maps\\ $\Rep_{(3,6,2)}\voll(A[T]) \to k$ defined by
    
    $$
   \hbar_0: (A,B,C,M) \mapsto  
    \det{\left(
    \begin{array}{c|c|c||c|c|c}
    MA &   &  &   & -MA &   \\
    \hline
    & MA &  &  &  & -MA \\
    \hline
    &  &  MA & & & -MA \\
    \hline
    \hline
    A &   & C &  & B & \\
    \hline
    & B &   &  A & & C 
    \end{array}\right)}, \\ 
    $$
    $\hbar_1: (A,B,C,M) \mapsto  \det(A\,\vert \, B)$, $\hbar_2: (A,B,C,M) \mapsto \det(A\,\vert \, C)$,\\
    $\hbar_3: (A,B,C,M) \mapsto \det(A+C\,\vert\,B)$, $\hbar_4: (A,B,C,M) \mapsto \det(A+B\,\vert\,C)$, \\
    $\hbar_5: (A,B,C,M) \mapsto \det(A\,\vert\,B+C)$, $\hbar_6: (A,B,C,M)  \mapsto \det(A+B\,\vert\,B+C)$, and $\hbar_{7}: (A,B,C,M)\mapsto \det(A+C\,\vert \, B+C)$ give rise to the geometric quotient $\hat{\pi}:\Rep_{(3,6,2)}^\st(A[T]) \to \cP^6$ by $\G_d$ in the following way 
\[
\hat{\pi}:=\big(\hbar_0\hbar_1,\hbar_0\hbar_2,\hbar_0\hbar_3,\hbar_0\hbar_4,\hbar_0\hbar_5, \hbar_0\hbar_6, \hbar_0\hbar_7\big),
\]
This shows that $\M_{(3,6,2)}^\st\big(A[T]\big) ~=~\cP^6$.
\end{enumerate}
\end{bsp}

\section{Higher Gel'fand MacPherson correspondence}
We first recall the definition of quiver Grassmannians (see for example \cite{CFR1}). For a quiver $Q$, a representation $X$ of $Q$ of dimension vector $d$ and another dimension vector $e\leq d$, we define $\Gr^e_Q(X)$ as the set of subrepresentations $U$ of $X$ of dimension vector $d-e$. This carries a natural scheme structure as the geometric quotient by the base change group $\G_d$ of the set ${\rm Hom}_Q(X,e)_e$ of surjections (that is, rank $e$ maps) from $X$ to a representation of dimension vector $e$.

From now on, we assume $\End_A(T)=k$, $\Ext_A(T,T)=0$ and $\gamma_{T^s,d}=d$.

In this case, the regular map
\[
\phi:\Rep_{(s,d)}\voll(A[T]) \to \Hom_Q(T^s,d)_{d},
\big(M,f:T^s\to M\big)\mapsto f,
\]
which is always a locally trivial fiber bundle (\cite[Lemma 1.2.]{CB4}), has single element fibres, and thus is an isomorphism of varieties. Moreover, we have the $\G_d$-bundle
\[
\kappa: \Hom_Q(T^s,d)_{d} \to \Gr^d_Q(T^s),(\theta_i)_{i\in Q_0}\mapsto \ker{\oplus_{i\in Q_0}\theta_i}.
\]

All in all, we achieve a $\G_d$-bundle
\[
\psi: \Rep_{(s,d)}^\sst(A[T]) \to \Gr^{d,\sst}_Q(T^s),
\]
where $\Gr^{d,\sst}_Q(T^s):=\kappa\circ\phi\bigg(\Rep_{(s,d)}^\sst(A[T])\bigg)\subseteq \Gr^{d}_Q(T^s)$ is open.

We thus have an induced map
\[
\pi:\Gr^{d,\sst}_Q(T^s) \to \M_{(s,d)}^\sst\big(A[T]\big).
\]

The linear reductive group $\Aut_Q(T^s)$ acts naturally on $\Gr_Q^d(T^s)$ and for $\big[\tilde{M}\big]\in\M_{(s,d)}^\sst\big(A[T]\big)$ clearly we have
\[
\pi\inv\big([\tilde{M}]\big)=\Aut_Q(T^s).[(M,f)].
\]

We thus find:

\begin{thm}[Higher {Gel'fand MacPherson} correspondence]\label{gelfand_mac}$ $\\
There is an isomorphism of varieties:
\[
\M^\sst_{(s,d)}\big(A[T]\big)~\simeq~\Gr^{d,\sst}_Q(T^s) \big/\Aut_Q(T^s).
\]
\end{thm}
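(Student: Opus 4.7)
The plan is to assemble the isomorphism from the maps already in place, using that $\G_{(s,d)}$ splits as a direct product $\Gl_s(k)\times \G_d$, so one may take the GIT quotient in two stages.

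First, I would collect the ingredients established just before the statement. We have an isomorphism of varieties $\phi:\Rep_{(s,d)}\voll(A[T]) \to \Hom_Q(T^s,d)_d$ (using $\End_A(T)=k$, which forces the fibers of the locally trivial bundle to be single points), the $\G_d$-bundle $\kappa:\Hom_Q(T^s,d)_d \to \Gr^d_Q(T^s)$, and the resulting $\G_d$-bundle
\[
\psi:\Rep^\sst_{(s,d)}(A[T]) \to \Gr^{d,\sst}_Q(T^s).
\]
Because $\psi$ is a principal $\G_d$-bundle onto its image, it realizes $\Gr^{d,\sst}_Q(T^s)$ as the geometric quotient of $\Rep^\sst_{(s,d)}(A[T])$ by the $\G_d$-factor of $\G_{(s,d)}=\Gl_s(k)\times\G_d$.

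Next, I would exploit the direct product structure of $\G_{(s,d)}$: the residual $\Gl_s(k)$-action on $\Rep^\sst_{(s,d)}(A[T])$ commutes with the $\G_d$-action and therefore descends to an action on the $\G_d$-quotient $\Gr^{d,\sst}_Q(T^s)$. Under the identification $\End_A(T)=k$ we have $\Aut_Q(T^s)=\Gl_s(k)$, and one checks that the descended action coincides with the natural $\Aut_Q(T^s)$-action on $\Gr^d_Q(T^s)$ (it is precisely base change on the source $T^s$ in the surjection picture $\Hom_Q(T^s,d)_d$, which maps $\G_d$-equivariantly to the Grassmannian). Consequently the moduli space
\[
\M^\sst_{(s,d)}\bigl(A[T]\bigr) \;=\; \Rep^\sst_{(s,d)}(A[T])\bigl/\G_{(s,d)}
\]
is the quotient of $\Gr^{d,\sst}_Q(T^s)$ by $\Aut_Q(T^s)$, at least on the level of geometric points, and $\pi$ identifies with this quotient map.

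Finally I would upgrade this to an isomorphism of varieties. The map $\pi$ is surjective, its fibres are exactly the $\Aut_Q(T^s)$-orbits by the identification $\pi\inv([\tilde M])=\Aut_Q(T^s).[(M,f)]$ noted above, and $\Aut_Q(T^s)\cong\Gl_s(k)$ is reductive. The universal property of the GIT quotient gives a canonical morphism $\Gr^{d,\sst}_Q(T^s)/\Aut_Q(T^s)\to \M^\sst_{(s,d)}(A[T])$; bijectivity on closed points combined with the smoothness and normality of $\M^\sst_{(s,d)}(A[T])$ coming from Theorem \ref{GeoPropModuli} lets us conclude it is an isomorphism.

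The main obstacle I expect is the bookkeeping in the second paragraph, namely verifying that the $\Gl_s(k)$-action descended from $\Rep^\sst_{(s,d)}(A[T])$ really matches the $\Aut_Q(T^s)$-action on $\Gr^{d,\sst}_Q(T^s)$ (rather than, say, a twist). The cleanest way to see this is to write everything in terms of the intermediate space $\Hom_Q(T^s,d)_d$, where both group actions are transparent: $\G_d$ acts on the target $d$-dimensional representation and $\Aut_Q(T^s)=\Gl_s(k)$ acts by precomposition on $T^s$, and the latter descends through $\kappa$ exactly to the standard action on $\Gr^d_Q(T^s)$.
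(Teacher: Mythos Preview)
Your approach is essentially the paper's: both use the map $\pi$ with fibres equal to $\Aut_Q(T^s)$-orbits and the normality of $\M^\sst_{(s,d)}(A[T])$ from Theorem \ref{GeoPropModuli} to conclude. The paper packages the final step by additionally noting that $\Gr^{d,\sst}_Q(T^s)$ is irreducible (as an open subset of the Grassmannian of a rigid representation, via Theorem \ref{homlogieTrifftGeometrie2}) and then invoking \cite[Theorem 4.2]{PV}, which is exactly the ``surjective, constant on orbits, normal target, irreducible source $\Rightarrow$ geometric quotient'' statement your last paragraph gestures at; you should record the irreducibility of the source and cite such a result rather than leave the bijectivity-plus-normality step informal.
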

\begin{proof}By Theorem \ref{GeoPropModuli} $\M^\sst_{(s,d)}\big(A[T]\big)$ is smooth, in particularly normal. Theorem \ref{homlogieTrifftGeometrie2} shows that the quiver {Grassmanian} of a representation without self-extensions is irreducible. Since $\pi$ is surjective the claim follows from \cite[Theorem 4.2]{PV}.
\end{proof}	

\section{Motive of the moduli space}
In this section, we assume $\Ext_A(T,T)=0$ and $\gamma_{T^s,d}=d$.
 
As an application of the arguments in Theorem \ref{verAllgKin} and the explicit recursive formula given there, we derive a formula for the motive of the (smooth) moduli space $\M_{(s,d)}^\sst(\cC):=\M_{(s,d)}^\sst\big(A[T]\big)$ (over $\cC$).

To achieve this, we will follow closely the strategy of \cite[Section 6]{Re2}, but replace counts of rational points over finite fields by motives as in the proof of \cite[Theorem 3.5]{RSW}. We denote by $K_0({\rm Var}/\mathbb{C})$ the free abelian group generated by representatives $[X]$ of isomorphism classes of complex varieties $X$, modulo the relation $[X]=[C]+[U]$ if $C$ is isomorphic to a closed subvariety of $X$ with open complement isomorphic to $U$. Multiplication in $K_0({\rm Var}/\mathbb{C})$ is given by $[X]\cdot[Y]=[X\times Y]$. We denote by $\mathbb{L}$ the class of the affine line; the following calculations will be performed in the localization $$\mathcal{K}=K_0({\rm Var}/\mathbb{C})[\mathbb{L}^{-1},(1-\mathbb{L}^n)^{-1}\, :\, n\geq 1].$$

At this point, we recall a notation from the subsection \ref{hns}. Let
\[
H:=\left\{ (s,d)^* \,:\, \exists\, r\in\cN_{>1},\,\text{$(s,d)^* = \big( (s^1,d^1),\ldots, (s^r,d^r) \big)$ is of HN-type} \right.
\]
\[
\left. \text{with weight $(s,d)$ and $s^r\neq0$}  \right\}. ~~~~~~~
\]
In this section, we write $\Rep_{(s,d)}$ for $\Rep_{(s,d)}\voll(A[T])$ and $\M_{(s,d)}^\sst  $ for $\M_{(s,d)}^\sst\big(A[T]\big)$. 

\subsection{Motive}
Obviously $H$ is finite and by Theorem \ref{StrukturMorphismusSurjektivCharakDurchHNFilt} we have
\[
\Rep_{(s,d)}~~=~~ \Rep_{(s,d)}^\sst ~\sqcup \bigsqcup_{(s,d)^*\in H} \Rep^{HN}_{(s,d)^*}
\]
and thus
\[
[ \Rep_{(s,d)}^\sst] ~~=~~ [ \Rep_{(s,d)}]
~-~ \sum_{(s,d)^*\in H} [ \Rep^{HN}_{(s,d)^*}]
\]
in $\mathcal{K}$. We then find
\[
\frac{[ \Rep_{(s,d)}^\sst]}{[\G_{(s,d)}]}  ~~=~~ \frac{[\Rep_{(s,d)}]}{[\G_{(s,d)}]}  
~-~ \sum_{(s,d)^*\in H} \frac{[ \Rep^{HN}_{(s,d)^*}]}{[\G_{(s,d)}]} .
\]

Fix $(s,d)^*\in H$. As in the proof of Theorem \ref{verAllgKin} we have
\begin{eqnarray*}
\Rep^{HN}_{(s,d)^*}  & \simeq & \G_{(s,d)}\times^{\mathcal{P}_{(s,d)^*}}\mathcal{Z}_0\,, \\
{[\mathcal{Z}_{0}]}& =& \mathbb{L}^{\sum_{n<l} \sum_{\alpha: i\to j} d_i^ld_j^n + s^l \langle t, d^n\rangle_Q} ~~\cdot~~\prod_{i=1}^r[\Rep_{(d^i,s^i)}^\sst],
\end{eqnarray*}
and we arrive at
\[
[\Rep^{HN}_{(s,d)^*}] ~~=~~
\frac{[ \G_{(s,d)}] }{[ \mathcal{P}_{(s,d)^*}]}
~\cdot~
\mathbb{L}^{\sum_{n<l} \sum_{\alpha: i\to j} d_i^ld_j^n + s^l \langle t, d^n\rangle_Q} ~~\cdot~~\prod_{i=1}^r[\Rep_{(d^i,s^i)}^\sst].
\]

This provides us with the motivic HN-recursion:
\begin{thm}
\[
\frac{[ \Rep_{(s,d)}^\sst]}{[\G_{(s,d)}]}  ~~~=~~~ \frac{[ \Rep_{(s,d)}]}{[\G_{(s,d)}]}  
~~~-~~ \sum_{(s,d)^*\in H}\frac{\mathbb{L}^{\sum_{n<l} \sum_{\alpha: i\to j} d_i^ld_j^n + s^l \langle t, d^n\rangle_Q}}{[ \Rep_{(s,d)^*}]}
~~\cdot~~ \prod_{i=1}^r [\Rep_{(d^i,s^i)}^\sst]\;\;\,.
\]
\end{thm}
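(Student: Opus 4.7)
The plan is to lift the equality of sets given by the Harder--Narasimhan stratification to an equality of classes in $\mathcal{K}$ via the scissor relation, and then to evaluate the class of each HN stratum using the geometric description of it already obtained in the proof of Theorem~\ref{verAllgKin}. Concretely, the decomposition $\Rep_{(s,d)} = \Rep_{(s,d)}^\sst \sqcup \bigsqcup_{(s,d)^*\in H} \Rep^{HN}_{(s,d)^*}$ from Theorem~\ref{StrukturMorphismusSurjektivCharakDurchHNFilt} writes the full variety as a disjoint union of one open piece and finitely many locally closed pieces; iterated use of the cut-and-paste relation therefore yields $[\Rep_{(s,d)}^\sst] = [\Rep_{(s,d)}] - \sum_{(s,d)^* \in H} [\Rep^{HN}_{(s,d)^*}]$ in $\mathcal{K}$. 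Division by $[\G_{(s,d)}]$ is legal in $\mathcal{K}$ because $[\G_{(s,d)}]$ is a product of general linear group classes, which factor into terms of the form $\mathbb{L}^a(\mathbb{L}^b-1)$ and are hence invertible in the chosen localization.

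The nontrivial step is then to identify each $[\Rep^{HN}_{(s,d)^*}]$ with the expression appearing in the theorem. For this I invoke the proof of Theorem~\ref{verAllgKin}: the action morphism $m : \G_{(s,d)} \times^{\mathcal{P}_{(s,d)^*}} \mathcal{Z}_0 \to \Rep^{HN}_{(s,d)^*}$ is bijective onto its image and induces an isomorphism of varieties by uniqueness of the HN filtration and smoothness (Theorem~\ref{homlogieTrifftGeometrie2}); combining this with the already-noted Zariski local triviality of $\G_{(s,d)} \to \G_{(s,d)}/\mathcal{P}_{(s,d)^*}$ gives $[\Rep^{HN}_{(s,d)^*}] = \bigl([\G_{(s,d)}]/[\mathcal{P}_{(s,d)^*}]\bigr)\cdot[\mathcal{Z}_0]$ in $\mathcal{K}$. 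Next I compute $[\mathcal{Z}_0]$ by iterating the fibrations $\mathcal{Z}_{e^*_l} \to \mathcal{Z}_{e^*_{l-1}} \times \Rep_{(s^l,d^l)}^{\sst}$ used in the proof of Theorem~\ref{verAllgKin}; each step has irreducible fibers isomorphic to the derivation space $\Der_{\tilde R}\bigl(B,\Hom_{\tilde R}(\tilde M_l,\tilde M_n)\bigr)$, which by Corollary~\ref{Derivation} is an affine space of constant dimension $\sum_{\alpha:i\to j} d_i^l d_j^n + s^l\langle t,d^n\rangle_Q$ (using $\Ext_A(T,T)=0$ and the fact that semistables are full by Corollary~\ref{sstSindGenVoll}). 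Summing these contributions over the pairs $n<l$ and multiplying by the base class $\prod_{i=1}^r [\Rep_{(s^i,d^i)}^\sst]$ produces exactly the power of $\mathbb{L}$ in the claimed formula, and assembling the pieces yields the stated recursion.

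The main obstacle is guaranteeing motivic multiplicativity at each fibration step, i.e.\ that each $\mathcal{Z}_{e^*_l} \to \mathcal{Z}_{e^*_{l-1}} \times \Rep_{(s^l,d^l)}^\sst$ is Zariski locally trivial with affine-space fiber of \emph{constant} dimension. Constancy of the fiber dimension is the delicate part: it holds precisely because Corollary~\ref{Derivation} (invoking $\Ext_A(T,T)=0$ and fullness of the $\tilde M_i$) makes the derivation dimension depend only on dimension vectors and not on the chosen representative, and because $\gamma_{T^s,d}=d$ keeps us inside $\Rep_{(s,d)}^{\mathrm{full}}$. Local triviality in the Zariski topology for the filtered-representation bundles is standard once the fiber is a vector bundle over the base, which here it is by linearity of the derivation condition in the off-diagonal homomorphisms. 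Given these ingredients, the formula follows by straightforward bookkeeping.
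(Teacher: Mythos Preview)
Your proposal is correct and follows essentially the same route as the paper: the HN stratification of $\Rep_{(s,d)}$ from Theorem~\ref{StrukturMorphismusSurjektivCharakDurchHNFilt} gives the scissor identity, and each stratum class is computed via the isomorphism $\Rep^{HN}_{(s,d)^*}\simeq \G_{(s,d)}\times^{\mathcal{P}_{(s,d)^*}}\mathcal{Z}_0$ together with the iterated affine-space fibrations of $\mathcal{Z}_0$ established in the proof of Theorem~\ref{verAllgKin}. If anything you are more explicit than the paper about why the fibrations are Zariski locally trivial with constant fibre dimension (via Corollary~\ref{Derivation} and Corollary~\ref{sstSindGenVoll}), which the paper absorbs into the reference to \cite{RSW}.
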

Using the arguments of \cite[Theorem 6.7.]{Re2}, we see that the following relation 
\[
\sum_{i\in\cZ}\dim_{\cC} \mathbf{H}^i\big(\M_{(s,d)}^\sst,\mathbb{Q}\big)\mathbb{L}^{i/2}
~~=~~ [ \M_{(s,d)}^\sst  ]
\]
holds and we obtain: 
\begin{thm}\label{poincarepol} Let $(s,d)$ be a dimension vector such that semi-stability and stability  coincide. Then, with $\mathcal{P}\G_{(s,d)}:=\G_{(s,d)}/k^\times$ we have:
\[
\sum_{i\in\cZ}\dim_{\cC}\mathbf{H}^i\big(\M_{(s,d)}^\sst,\mathbb{Q}\big)\mathbb{L}^{i/2}~=~
\frac{[ \Rep_{(s,d)}]}{[\mathcal{P}\G_{(s,d)}]}  ~~~~-
\]
\[
(\mathbb{L}-1)\sum_{(s,d)^*\in H} \frac{1}{[ \mathcal{P}_{(s,d)^*}]} \cdot \mathbb{L}^{\sum_{n<l} \sum_{\alpha: i\to j} d_i^ld_j^n + s^l \langle t, d^n\rangle_Q} 
~~\cdot~~ \prod_{i=1}^r [\Rep_{(d^i,s^i)}^\sst ]\;\;\,.
\]
\end{thm}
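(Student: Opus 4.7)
My plan is to derive the formula by multiplying the motivic Harder--Narasimhan recursion of the preceding theorem by $(\mathbb{L}-1)$ and then invoking a purity statement to pass from motives to Betti numbers.

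The assumption that semi-stability and stability coincide has two consequences. First, $\Rep_{(s,d)}^\sst = \Rep_{(s,d)}^\st$. Second, every orbit in $\Rep_{(s,d)}^\st$ has stabilizer equal to the scalars $k^\times \subset \G_{(s,d)}$, because the endomorphism algebra of a stable representation is $k$. Hence the geometric quotient $\pi:\Rep_{(s,d)}^\sst \to \M_{(s,d)}^\sst$ is a principal $\mathcal{P}\G_{(s,d)}$-bundle in the relevant sense for the Grothendieck ring of varieties. Since the central $k^\times$-action exhibits $\G_{(s,d)} \to \mathcal{P}\G_{(s,d)}$ as a special principal bundle, one has $[\G_{(s,d)}] = (\mathbb{L}-1)\cdot [\mathcal{P}\G_{(s,d)}]$ in $\mathcal{K}$, and combining this with the quotient relation yields
\[
[\M_{(s,d)}^\sst] ~=~ \frac{[\Rep_{(s,d)}^\sst]}{[\mathcal{P}\G_{(s,d)}]} ~=~ (\mathbb{L}-1)\cdot\frac{[\Rep_{(s,d)}^\sst]}{[\G_{(s,d)}]}.
\]

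Next I would multiply the preceding motivic HN-recursion by $(\mathbb{L}-1)$. By the display above the left-hand side becomes $[\M_{(s,d)}^\sst]$; the first term on the right converts $[\G_{(s,d)}]$ to $[\mathcal{P}\G_{(s,d)}]$ in the denominator; and each correction summand picks up the factor $(\mathbb{L}-1)$, matching exactly the right-hand side of the claimed formula (where the denominator $[\mathcal{P}_{(s,d)^*}]$ originates from the Levi/parabolic dimension count in the proof of Theorem~\ref{verAllgKin}).

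The final and most delicate step is to identify the motive with the Poincar\'e polynomial
\[
[\M_{(s,d)}^\sst] ~=~ \sum_{i\in\cZ}\dim_{\cC}\mathbf{H}^i\big(\M_{(s,d)}^\sst,\mathbb{Q}\big)\mathbb{L}^{i/2}.
\]
Here I would follow \cite[Theorem~6.7]{Re2}: the HN-stratification of $\Rep_{(s,d)}$ built from iterated affine bundles over products of smaller smooth semistable moduli, together with the smoothness of $\M_{(s,d)}^\sst$ supplied by Theorem~\ref{GeoPropModuli}, allows an induction on $(s,d)$ forcing all cohomology to be of pure Tate type and concentrated in even degrees, so that the motive determines the singular Betti numbers. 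This purity step is the main obstacle; an equivalent route is a Weil-conjecture-based point count over finite fields exactly as in \cite{Re2}, where the recursion is first verified over $\mathbb{F}_q$ and then lifted to the Grothendieck ring.
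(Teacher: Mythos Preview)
Your proposal is correct and follows essentially the same approach as the paper. The paper cites \cite[Proposition~6.6]{Re2} for the relation $[\M_{(s,d)}^\sst] = [\Rep_{(s,d)}^\sst]/[\mathcal{P}\G_{(s,d)}]$ (which you spell out explicitly via the principal $\mathcal{P}\G_{(s,d)}$-bundle structure), then combines it with the preceding motivic HN-recursion; the passage from the motive to Betti numbers via purity is handled, exactly as you outline, by the arguments of \cite[Theorem~6.7]{Re2}, which the paper invokes immediately before the theorem statement.
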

\begin{proof}
As in \cite[Proposition 6.6.]{Re2} we have
\[
[ \M_{(s,d)}^\sst] ~= ~\frac{[ \Rep_{(s,d)}^\sst]}{[\mathcal{P}\G_{(s,d)}]},
\]
and the claim follows from the previous theorem.
\end{proof}

\subsection{Applications and examples}
\begin{lmm}Let $Q$ be of {Dynkin} type, let $\mathrm{Is}(Q,d)$ be the set theoretic quotient of $\Rep_{d}(Q)$ by the structure group $\G_{d}$ via the base change action, and for $M\in \Rep_{d}(Q)$ let
\[
\Hom_A^\mathrm{epi}(T,M):=\big\{ f\in\Hom_A(T,M) \,:\, \text{$f$ is surjective}\big\}.
\]
Then, we have
\[
[ \Rep_{(s,d)}]  ~~=~~
\sum_{ \substack{ [M]\in\mathrm{Is}(Q,d),\\ \exists\,T^s\epi M} } [ \mathcal{O}_M ]\, \cdot\, [\Hom_A^\mathrm{epi}(T^s,M) ].
\]
\end{lmm}
\begin{proof} We consider the map
\[
\Rep_{(s,d)}\to \bigg\{ M\in \Rep_d(Q)\,:\, \exists\, T^s\epi M \bigg\},\,(M,f)\mapsto M.
\]
and note that $[\Hom_A^\mathrm{epi}(T,?)]$ is constant along orbits.
\end{proof}

Overall, we find:
\begin{thm}\label{poincarepol_dynkin} Let $Q$ be of {Dynkin} type, and let $(s,d)$ be a dimension vector such that semi-stability and stability coincide. Then we have:
\[
\sum_{i\in\cZ}\dim_{\cC}\mathbf{H}^i\big(\M_{(s,d)}^\sst,\mathbb{Q}\big)\mathbb{L}^{i/2}~=~
\frac{1}{[\mathcal{P}\G_{(s,d)}]} ~~\cdot~~
\sum_{ \substack{ [M]\in\mathrm{Is}(Q,d),\\ \exists\,T^s\epi M} } [ \mathcal{O}_M] \,\cdot\, [\Hom_A^\mathrm{epi}(T^s,M) ]~~-~~
\]
\[
(\mathbb{L}-1)~~\cdot~~ \sum_{(s,d)^*\in H} \underbrace{\frac{1}{[ \mathcal{P}_{(s,d)^*}]} \cdot \mathbb{L}^{\sum_{n<l} \sum_{\alpha: i\to j} d_i^ld_j^n + s^l \langle t, d^n\rangle_Q} 
\cdot~~ \prod_{i=1}^r [\Rep_{(d^i,s^i)}^\sst]}_{=:\mathcal{S}_{(s,d)^*}}\;\;\,.
\]
\end{thm}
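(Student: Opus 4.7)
The strategy is straightforward: the theorem is a direct combination of Theorem \ref{poincarepol} with the Lemma stated immediately above. So my plan is simply to substitute the Dynkin-case expression for $[\Rep_{(s,d)}]$ coming from the Lemma into the Poincaré formula.

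More concretely, first I would recall Theorem \ref{poincarepol}, which gives
\[
\sum_{i\in\cZ}\dim_{\cC}\mathbf{H}^i\big(\M_{(s,d)}^\sst,\mathbb{Q}\big)\mathbb{L}^{i/2}~=~\frac{[\Rep_{(s,d)}]}{[\mathcal{P}\G_{(s,d)}]}~-~(\mathbb{L}-1)\sum_{(s,d)^*\in H}\mathcal{S}_{(s,d)^*}.
\]
The second summand already has exactly the form appearing in the statement, so no work is required there. It remains to rewrite the first summand.

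For this, I invoke the preceding Lemma, which gives
\[
[\Rep_{(s,d)}]~=~\sum_{\substack{[M]\in\mathrm{Is}(Q,d),\\ \exists\,T^s\epi M}} [\mathcal{O}_M]\cdot [\Hom_A^{\mathrm{epi}}(T^s,M)].
\]
The hypothesis that $Q$ is Dynkin enters precisely here, because by Gabriel's theorem there are only finitely many isomorphism classes $[M]$ of dimension vector $d$, so the sum is finite and the decomposition $\Rep_d(Q) = \bigsqcup_{[M]} \mathcal{O}_M$ is a well-defined stratification into locally closed subsets whose classes in $\mathcal{K}$ add. Pulling back along the projection $(M,f) \mapsto M$ and using that $[\Hom_A^{\mathrm{epi}}(T^s,?)]$ is constant along $\G_d$-orbits (since automorphisms act isomorphically on the $\Hom^{\mathrm{epi}}$-spaces) then yields the formula of the Lemma. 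Substituting this directly into the formula from Theorem \ref{poincarepol} and dividing by $[\mathcal{P}\G_{(s,d)}]$ gives the claimed identity.

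There is no real obstacle: both ingredients are already established earlier in the paper, and the proof amounts to a substitution. The only subtle point worth emphasising is the assumption that semi-stability and stability coincide, which is needed to apply Theorem \ref{poincarepol} itself (ensuring smoothness of $\M_{(s,d)}^\sst$ and the identity $[\M_{(s,d)}^\sst] = [\Rep_{(s,d)}^\sst]/[\mathcal{P}\G_{(s,d)}]$ via \cite[Proposition 6.6.]{Re2}), and the Dynkin hypothesis, which reduces the motivic class of $\Rep_{(s,d)}$ to a finite orbit sum.
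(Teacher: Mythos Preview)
Your proposal is correct and matches the paper's approach exactly: the paper offers no separate proof for this theorem beyond the phrase ``Overall, we find:'', since it is precisely the substitution of the preceding Lemma's expression for $[\Rep_{(s,d)}]$ into Theorem~\ref{poincarepol}. Your added remarks on why the Dynkin hypothesis (finiteness of orbits) and the coincidence of stability with semi-stability are needed are accurate and in fact more explicit than what the paper provides.
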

\begin{bsp}$ $\\Let 
\[
Q=\big( 1\to 2\big),~~T=\bigg( k^3\xto{[1,0,0]}k\bigg),
\]
and $(s,d)=(2,4,1)$. Then $H$ consists of
\[
(1,2,0)\, >_\mu(1,2,1),~~~~(1,1,1)\, >_\mu(1,3,0),~~~~(1,1,0)\, >_\mu(1,3,1).
\]
We calculate all components in the formula for each element in $H$:
\begin{eqnarray*}
\mathcal{S}_{(1,2,0)\, >_\mu(1,2,1)} & = & \frac{(\mathbb{L}^3-1)(\mathbb{L}^3-\mathbb{L})}{(\mathbb{L}-1)^4},\\
\mathcal{S}_{(1,1,1)\, >_\mu(1,3,0)} & = & \frac{1}{(\mathbb{L}-1)^2},\\
\mathcal{S}_{(1,1,0)\, >_\mu(1,3,1)} & = & \frac{(\mathbb{L}^3-1)}{\mathbb{L}(\mathbb{L}-1)^3}.\\
\end{eqnarray*}
We continue to calculate and get
\[
\frac{[ \Rep_{(s,d)}]}{[\mathcal{P}\G_{(s,d)}]}~=~
\frac{\mathbb{L}^2(\mathbb{L}^4-1)}{(\mathbb{L}-1)^2}~+~\frac{(\mathbb{L}^3-1)(\mathbb{L}^4-1)}{(\mathbb{L}-1)(\mathbb{L}^2-1)(\mathbb{L}^2-\mathbb{L})}.
\]
In total, we end up with:
\begin{eqnarray*}
\sum_{i\in\cZ}\dim_{\cC}\mathbf{H}^i\big(\M_{(s,d)}^\sst,\mathbb{Q}\big)\mathbb{L}^{i/2}
& = &
\frac{\mathbb{L}^2(\mathbb{L}^4-1)}{(\mathbb{L}-1)^2}~+~\frac{(\mathbb{L}^3-1)(\mathbb{L}^4-1)}{(\mathbb{L}-1)(\mathbb{L}^2-1)(\mathbb{L}^2-\mathbb{L})} \\
&  &~-~
\frac{(\mathbb{L}^3-1)(\mathbb{L}^3-\mathbb{L})}{(\mathbb{L}-1)^3}~-~\frac{1}{(\mathbb{L}-1)}~-~\frac{(\mathbb{L}^3-1)}{\mathbb{L}(\mathbb{L}-1)^2} \\
& = & 1+\mathbb{L}+\mathbb{L}^2+\mathbb{L}^3+\mathbb{L}^4.
\end{eqnarray*}
This result was to be expected if one remembers our explicit calculations of the moduli space.
\end{bsp}

\bibliographystyle{plain} 
\bibliography{main}

\end{document}